\documentclass[a4paper]{amsart}

\usepackage[labelfont=bf, textfont=normal,
			justification=justified,
			singlelinecheck=false]{caption} 
\usepackage{graphicx} 
\usepackage{hyperref} 
\usepackage{bbm} 
\usepackage[top=2.9cm, bottom=2.9cm,
			inner=3cm, outer=3cm]{geometry}			
\setlength{\parindent}{0cm} 
\usepackage{parskip} 

\usepackage{mathtools}
\mathtoolsset{showonlyrefs}

\usepackage{enumerate}

\usepackage{booktabs}

\usepackage{enumitem}

\usepackage{pgfplots}
\pgfplotsset{compat=1.16}

\usepackage{subcaption}
\usepackage{pgf,tikz}
\usetikzlibrary{spy, external}
\usepackage{caption}
\usepackage{todonotes}
\usepackage{comment}
\usetikzlibrary{arrows.meta, positioning, shadows, calc}

\usepackage{float}

\newcommand{\triple}{{\vert\kern-0.25ex\vert\kern-0.25ex\vert}}

\usepackage{comment}
\newcommand*\dd{\mathop{}\!\mathrm{d}}
\newcommand{\R}{\mathbb{R}}

\newcommand{\PP}{\mathbb{P}}

\newcommand*{\E}{\mathbb{E}}

\newcommand{\N}{\mathcal{N}}

\newtheorem{theorem}{Theorem}[section]
\newtheorem{corollary}{Corollary}[section]
\newtheorem{proposition}{Proposition}[section]

\newtheorem{remark}{Remark}[section]

\theoremstyle{definition}

\newcommand{\defeq}{\mathrel{:\mkern-0.25mu=}}
\newcommand{\eqdef}{\mathrel{=\mkern-0.25mu:}}

\DeclareMathOperator\spann{span}

\definecolor{crimson2143940}{RGB}{214,39,40}
\definecolor{darkgray176}{RGB}{176,176,176}
\definecolor{darkorange25512714}{RGB}{255,127,14}
\definecolor{forestgreen4416044}{RGB}{44,160,44}
\definecolor{lightgray204}{RGB}{204,204,204}
\definecolor{mediumpurple148103189}{RGB}{148,103,189}
\definecolor{sienna1408675}{RGB}{140,86,75}
\definecolor{steelblue31119180}{RGB}{31,119,180}
\definecolor{orchid227119194}{RGB}{227,119,194}

\usepackage{amssymb}
\usepackage{xcolor}

\usepackage{color}

\usepackage{mathtools}  

\title[Nonlinear filtering based on density approximation and deep BSDE prediction]{
Nonlinear filtering based on density approximation and deep BSDE prediction}

\begin{document}
\author[K.~B{\aa}gmark]{Kasper B{\aa}gmark}
\address{Kasper B{\aa}gmark\\
Department of Mathematical Sciences\\
Chalmers University of Technology and University of Gothenburg\\
SE--412 96 Gothenburg\\
Sweden}
\email{bagmark@chalmers.se}

\author[A.~Andersson]{Adam Andersson} 
\address{Adam Andersson\\ 
Department of Mathematical Sciences\\
Chalmers University of Technology and University of Gothenburg\\
S--412 96 Gothenburg, Sweden\\
and Saab AB \\
S--412 76 Gothenburg, Sweden} 
\email{adam.andersson@chalmers.se} 

\author[S.~Larsson]{Stig Larsson}
\address{Stig Larsson\\
Department of Mathematical Sciences\\
Chalmers University of Technology and University of Gothenburg\\
SE--412 96 Gothenburg\\
Sweden}
\email{stig@chalmers.se}

\keywords{Filtering problem, Fokker--Planck equation, backward stochastic differential equations, numerical analysis, convergence order, Hörmander's condition, deep learning}
\subjclass[2020]{60G25, 60G35, 62F15, 62G07, 62M20, 65C30, 65M75, 68T07}

\begin{abstract}
    A novel approximate Bayesian filter based on backward stochastic differential equations is introduced. It uses a nonlinear Feynman--Kac representation of the filtering problem and the approximation of an unnormalized filtering density using the well-known deep BSDE method and neural networks. The method is trained offline, which means that it can be applied online with new observations. A hybrid \emph{a priori}-\emph{a posteriori} error bound is proved under a parabolic Hörmander condition. The theoretical convergence rate is confirmed in two numerical examples.
\end{abstract}
\maketitle
\section{Introduction}
We consider a hybrid system, which couples a state equation in continuous time with measurements in discrete time:
\begin{align}
    \label{introduction eq: state}
    S_t
    &=
    S_0
    +
    \int_0^t
    \mu
    (S_s)
    \dd s
    +
    \int_0^t
    \sigma
    (S_s)
    \dd B_s
    , \quad
    t\in [0,T],
    \\
    \label{introduction eq: obs}
    O_k
    &=
    h(S_{t_k})
    +
    V_k
    ,\quad
    k=1,\dots,K
    .
\end{align}
Here, the process $S$ evolves according to a Stochastic Differential Equation (SDE) with time-independent coefficients $\mu$ and $\sigma$, driven by Brownian motion $B$, and with initial distribution $\pi_0$. Observations are made at discrete times $t_1,\dots,t_K$, with a measurement function $h$ and additive independent noise $V_1,\dots,V_K$.
The filtering problem seeks to compute for every $k\in\{1,\dots,K\}$ the conditional distribution of the latent state $S_{t_k}$ given observations $O_{1:k}$. In other words, the goal is to estimate the conditional density $p(S_{t_k} \mid O_{1:k})$, which encodes all information available about the system at time $t_k$. This problem is central to Bayesian inference for dynamical systems and is commonly referred to as Bayesian filtering.

Classical, exact or approximate, solutions to the filtering problem---such as the Kalman filter and its extensions---are effective in low dimensions or when the true solution is close to Gaussian. However, in the nonlinear, non-Gaussian regime, such methods are either biased (e.g., the extended Kalman filter) or suffer from the curse of dimensionality (e.g., particle filters).

This paper introduces a novel approximation scheme for the Bayesian filtering problem with the aim of mitigating the curse of dimensionality. We formulate the filtering problem recursively as a sequence of prediction steps governed by the Fokker--Planck Partial Differential Equation (PDE), intertwined with update steps via Bayes' formula. By leveraging the connection between parabolic PDEs and Backward Stochastic Differential Equations (BSDE), we obtain a probabilistic representation of the prediction step.  The resulting BSDEs are solved numerically by using the deep BSDE method \cite{E_2017}, while the update step is implemented in closed form at each observation time $t_k$. This leads to a recursive approximation scheme.

Our main contributions are as follows:
\begin{enumerate}[label=\arabic*.]
    \item We propose a density-based recursive filter, which is based on sequentially employing the deep BSDE method with observations as input.
    \item We prove a hybrid \emph{a priori}-\emph{a posteriori} error estimates for the method, under a parabolic Hörmander condition, extending the convergence results of~\cite{han_convergence} to a setting involving a sequence of uncoupled forward-backward systems.
    \item We demonstrate the theoretical convergence order in numerical experiments.
\end{enumerate}
The numerical experiments focus on convergence behaviour and are therefore restricted to one-dimensional examples. A complementary numerical study assessing scalability and comparison with classical filtering methods was conducted in~\cite{baagmark2025high}. There, the method was shown to scale successfully, including a nonlinear example in dimension $100$. The code for the implementation is publicly available\footnote{\url{https://github.com/bagmark/deep-density-filtering}}.

\subsection{Bayesian filtering}
Bayesian filtering has broad applicability across science and engineering. 
It plays a foundational role in target tracking~\cite{barshalom2001estimation, blackman1999design, cui2005comparison, goodman1997mathematics},
robot localization~\cite{thrun2005probabilistic},
and neural decoding~\cite{kording2004bayesian}. In finance, it helps to estimate latent variables like volatility~\cite{johannes2009mcmc},
and in geosciences, it enables data assimilation through ensemble filters~\cite{evensen2009data}.
Computer vision tasks like object tracking can be tackled via particle filters~\cite{isard1998condensation}. For multi-sensor navigation and fusion of inertial data, Bayesian filters are a staple~\cite{maybeck1979stochastic},
and in epidemiology, they support real-time estimation of latent disease spread~\cite{jasra2005population}.

\paragraph*{Bayesian approaches}
When the system dynamics and the observations \eqref{introduction eq: state}--\eqref{introduction eq: obs} are linear and Gaussian, the conditional distribution remains Gaussian and is computed recursively via the Kalman filter \cite{Kalman_Bucy}. In the nonlinear case, there are several approximation techniques. The extended Kalman filter linearizes the dynamics through Taylor expansion, while the unscented Kalman filter employs sigma-point sampling. These filters approximate the posterior as Gaussian and often perform well in moderate dimensions when the nonlinearity is weak. Beyond Gaussian approximations, the ensemble Kalman filter uses particle ensembles to approximate moments of the posterior, though it still relies on linear-Gaussian observation models. Particle filters (also known as sequential Monte Carlo methods) represent the gold standard in nonlinear filtering, by propagating samples (particles) and updating them via importance weights. Sampling-based methods based on both particle filters and other techniques have met great interest and success in the last few years \cite{chopin2023computational, corenflos2024particlemala, corenflos2024conditioning, finke2023conditional, 
naesseth2019high, schauer2017guided, zhao2024conditional}. However, particle filters for state estimation often scale poorly with state dimension due to sample degeneracy and exponential growth in variance, which are hallmarks of the curse of dimensionality \cite{snyder2011particle,snyder2015performance}.

\paragraph*{PDE approaches} 
Classical filtering methods consist of a prediction step and an update step, where the prediction can be obtained by solving the Fokker--Planck equation. This is explored in \cite{challa2000nonlinear, demissie2016nonlinear} among others. However, solving high-dimensional PDEs remains a fundamental challenge in computational mathematics. Classical grid-based methods quickly become infeasible due to the curse of dimensionality. In recent years, several deep learning-based approaches have emerged to tackle this limitation by leveraging probabilistic formulations and neural networks. Among these, physics-informed neural networks \cite{raissi2019physics}, the deep Ritz method \cite{e2017deep}, neural operators \cite{Lu_2021}, and deep Picard iterations \cite{han2024deep} offer alternatives for PDE approximation that are less mesh-dependent. However, their performance often degrades in high dimensions, especially when global approximations over the entire domain are required \cite{krishnapriyan2021characterizing, luo2025physics}. The challenge intensifies in probabilistic formulations, where the PDE solution corresponds to a probability density. For example, in high dimensions, a Gaussian density becomes sharply concentrated, with values diminishing exponentially in $d$, resulting in severe numerical underflow and instability.

Two approaches that explicitly target high-dimensional problems, such as the ones we study, are the deep splitting method \cite{Arnulf_PDE} and the deep BSDE method \cite{E_2017}. Both exploit probabilistic representations of PDEs through Feynman--Kac formulas and use neural networks to approximate solutions. These methods have demonstrated effectiveness in high-dimensional settings, and recent extensions of the deep BSDE method have further improved its robustness \cite{andersson2025deep, kapllani2025backward}. Deep splitting solves a sequence of optimization problems along a time grid, but suffers from error accumulation and challenging training dynamics. In \cite{bagmark_1, Arnulf, Crisan_Lobbe}, deep splitting methods were applied to the Zakai equation, whose solution is a filtering density in the case of continuous in time observations. Later, in \cite{baagmark2024convergent}, the method was extended to the case of discrete observations, identical to the problem formulation of this paper. In the current paper, we introduce an approximate filter that uses the deep BSDE method for the prediction step.

\subsection{Outline of the paper}
Section~\ref{section: background} introduces the problem setting, notation, and the derivation of the new filtering method. In Section~\ref{section: error analysis}, we present the convergence theorem along with its proof. Section~\ref{section: numerical results} contains our numerical experiments with empirical convergence results and a discussion of the practical approximations required for the implementation as neural networks. In Section~\ref{section: conclusion} we summarize the results and give a brief outlook on possible extensions. Implementation details are provided in the Appendix.

\section{The BSDE formulation of the filtering problem}
\label{section: background}
\subsection{Notation}
We use a fixed time horizon $T > 0$ and integers $d, d', K \geq 1$ throughout this paper.
We denote by $\langle x, y \rangle$ and $\|z\|$ the standard Euclidean inner product and norm when $x, y, z \in \mathbb{R}^d$ and the Frobenius norm when $z \in \mathbb{R}^{d \times d}$. The function space $C^{k,n}([0,T] \times \mathbb{R}^d; \mathbb{R})$ consists of real-valued functions defined on $[0,T] \times \mathbb{R}^d$ that are $k$ times continuously differentiable in time and $n$ times continuously differentiable in space, without requiring the existence of mixed derivatives with respect to $t$ and $x$. The subspace $C_{\mathrm{b}}^{k,n}([0,T] \times \mathbb{R}^d; \mathbb{R})$ contains all, not necessarily bounded, functions with bounded derivatives. The notation is adapted in obvious ways, when $k$ or $n$ equals $\infty$, or when the functions are independent of time. Given a measure space $(A, \mathcal{B}, \nu)$ and a Banach space $U$, for $p \in [1, \infty]$, we use the standard notation for the Bochner space $L^p(A; U)$. For vector fields $V, W \in C^1(\mathbb{R}^d; \mathbb{R}^d)$, we define their Lie bracket as
\begin{align*}
    \big[V,W\big](x)
    =
    \mathrm{D}W(x)V(x)
    -
    \mathrm{D}V(x)W(x)
    ,\quad
    x\in\R^d
    ,
\end{align*}
where $\mathrm{D}V(x)$ denotes the Jacobian matrix of $V$ at $x$.
We say that a collection of vector fields $V_0, \dots, V_n \in C_{\mathrm{b}}^\infty(\mathbb{R}^d; \mathbb{R}^d)$ satisfies the parabolic Hörmander condition if, for every $x \in \mathbb{R}^d$,
\begin{align}
\begin{split} \label{eq: parabolic hörmander}
    &\spann \Big\{
    V_{j_0}(x),
    \big[V_{j_1}(x)
    ,V_{j_2}(x)\big],
    \Big[\big[V_{j_1}(x)
    ,V_{j_2}(x)\big]
    ,V_{j_3}(x)\Big],
    \dots ;
    \\
    &\hspace{8em}
    j_0\in \{1,\dots,n\},\,
    j_i \in \{0,\dots,n\}\text{ for all } 
    i=1,\dots,n
    \Big\}
    =
    \R^d.
\end{split}
\end{align}
Given a function $O\colon \mathbb{N}\to\mathbb{R}^{d'}$, we define the $d'\times k$ matrix $O_{1:k}=[O_{1},\dots,O_{k}]$, if $k\geq1$, and $O_{1:0}=\emptyset$, otherwise. To simplify the notation, we introduce the observation spaces $\mathbb{O}_k \coloneqq \R^{d' \times k}$ and $\mathbb{O} \coloneqq \bigcup_{k=1}^K \mathbb{O}_k$.

\subsection{Relating the Fokker--Planck equation to BSDEs}
Let the quadruple $(\Omega,\mathcal{A},(\mathcal{F}_t)_{0\leq t\leq T},\mathbb{P})$ be a complete filtered probability space and let the filtration $\mathcal{F} := (\mathcal{F}_t)_{0\leq t\leq T}$ be with respect to two independent $d$-dimensional Brownian motions $B$ and $W$. We assume that the initial state $S_0$ of \eqref{introduction eq: state} is distributed according to some probability density $\pi_0$. The observation equation \eqref{introduction eq: obs} implicitly leads to the likelihood function $L(o,x) \defeq p(O_k = o \mid S_{t_k}=x)$.

In the analysis and derivations of this paper, we require the following two conditions.
\begin{enumerate}[label=(\roman*)]
    \item \textbf{Smooth boundedness.}
    \label{eq: condition smooth}
    The coefficients and functions in  
    \eqref{introduction eq: state} and \eqref{introduction eq: obs} are smooth, with all derivatives of order at least one bounded. More precisely, it holds
    \begin{align*}
        \mu 
        &\in 
        C_{\mathrm{b}}^\infty(
        \mathbb{R}^d; 
        \mathbb{R}^d),
        \hspace{0.30em}
        \sigma 
        \in 
        C_{\mathrm{b}}^\infty(
        \mathbb{R}^d; 
        \mathbb{R}^{d \times d}),
        \hspace{0.30em}
        \pi_0 
        \in 
        C_{\mathrm{b}}^\infty(
        \mathbb{R}^d; 
        \mathbb{R}),
        \hspace{0.30em}
        L 
        \in 
        C_{\mathrm{b}}^{0,\infty}(
        \mathbb{R}^{d'} \times \mathbb{R}^d; 
        \mathbb{R}).
    \end{align*}
    Moreover, $L$ is uniformly bounded, i.e., there exists a constant $C>0$ such that
    \begin{align} 
    \label{eq: uniform bound L}
        \sup_{o\in \R^{d'}}
        \big\|
        L(o)
        \big\|_{L^\infty(\R^d;\R)}
        \leq
        C
        .
    \end{align}
    \item \textbf{Hypoellipticity.} \label{eq: condition hörmander}
    The coefficients $\mu$ and $\sigma$ satisfy the Hörmander condition. More precisely, by writing $\sigma = [\sigma_1, \dots, \sigma_d]$ for the column-wise decomposition and defining the vector fields
    \begin{align*}
        V_i 
        &\defeq 
        \sigma_i,
        \quad 
        i = 1, \dots, d,
        \qquad
        V_0 
        \defeq 
        \mu 
        + 
        \frac{1}{2} 
        \sum_{j = 1}^d 
        \mathrm{D} 
        V_j  \, V_j,
    \end{align*}
    we assume that the family $\{V_0, \dots, V_d\}$ satisfies the parabolic Hörmander condition \eqref{eq: parabolic hörmander} for all $x \in \mathbb{R}^d$.
\end{enumerate}
The state equation \eqref{introduction eq: state} has a unique solution $S = (S_t, ~t\in[0,T])$ under condition \eqref{eq: condition smooth} \cite{Oksendal}. We fix the discrete time points, $0=t_0 < t_1 < \dots < t_K = T$, uniformly, i.e., $t_{k} - t_{k-1} = \frac{T}{K}$ for all $k=1,\dots,K$.
The continuous-discrete filtering problem that we consider consists of computing the conditional probability density of $S$ at the time points $t_k$, $k=1,\dots,K$, given observations $o_{1:k}\in \mathbb{O}_k$. In order to introduce the equation for the filtering density, we begin by recalling the generator $A$ associated to $S$, and its adjoint operator $A^*$, which are defined for $\varphi\in C^2(\R^d;\R)$, by  
\begin{align*}
    A\varphi 
    = 
    \frac{1}{2}\sum_{i,j=1}^d a_{ij}\,
    \frac{\partial^2 \varphi}{\partial x_i \partial x_j} 
    + \sum_{i=1}^d \mu_i \, 
    \frac{\partial \varphi}{\partial x_i}
    ,\quad
    A^* \varphi 
    = 
    \frac{1}{2}\sum_{i,j=1}^d 
    \frac{\partial^2}{\partial x_i \partial x_j} 
    (a_{ij}\varphi) 
    - \sum_{i=1}^d 
    \frac{\partial}{\partial x_i} 
    (\mu_i \varphi),
\end{align*}
where $a=\sigma\sigma^\top\in\mathbb{R}^{d\times d}$. 
We now state the filtering equations, whose solution is the desired conditional density over time $[0,T]$, yielding the filtering densities at the time points $(t_k)_{k=1}^K$. These equations define functions $p_k\in C([t_k,t_{k+1}]\times\R^d,\R)$ sequentially for $k=0,\dots,K-1$, and are parameterized by $\mathbb{O}$. We begin, for $x\in\R^d$, by initializing
\begin{align} \label{eq: global Fokker--Planck with update}
    p_{0}
    (0,x)
    =
    \pi_0(x)
\end{align}
and continue by solving, for $k=0,\dots,K-1$ and $o_{1:k}\in\mathbb{O}_k$,
\begin{align}
    \label{eq: Fokker--Planck}
    p_{k}(t,x,o_{1:k}) 
    &= 
    p_{k}(t_{k},x,o_{1:k})
    +
    \int_{t_{k}}^{t} 
    A^* p_{k}(s,x,o_{1:k}) 
    \dd s
    ,\quad
    t\in (t_{k},t_{k+1}]
    ,
    \\
    \label{eq: FP-update}
    p_{k+1}
    (t_{k+1},x,o_{1:k+1})
    &
    =
    p_{k}
    (t_{k+1},x,o_{1:k})
    L(o_{k+1},x)
    .
\end{align}
The first equation, \eqref{eq: Fokker--Planck}, is the prediction equation, which is a deterministic PDE known as the Fokker--Planck equation. This is followed by the update equation, \eqref{eq: FP-update}, which is immediately tractable up to a normalizing constant $C = C(o_{1:k+1}) = \big(\int_{\R^d} p_{k}(t_{k+1},z,o_{1:k})L(o_{k+1},z) \dd z \big)^{-1}$ that is omitted here. Therefore, by \eqref{eq: global Fokker--Planck with update}--\eqref{eq: FP-update}, we obtain the (unnormalized) filtering density. In practical settings, it is sometimes necessary or useful to normalize the density, but it is not necessary for the theoretical results of this paper and is therefore omitted until Section~\ref{section: numerical results}. 

Next, we quote a regularity result for $p$ from \cite[Proposition~2.1]{baagmark2024convergent}. Throughout the remainder of this section, we assume that conditions~\eqref{eq: condition smooth} and~\eqref{eq: condition hörmander} hold.
\begin{proposition}
    \label{prop: properties of p} 
    There exists a unique $p_{k}\in L^\infty(\mathbb{O}_k;C([t_k,t_{k+1}]\times\R^d,\R))$, $k=0,\dots,K-1$, satisfying \eqref{eq: global Fokker--Planck with update}--\eqref{eq: FP-update}. Moreover, $p_{k}(o_{1:k})\in C_{\mathrm b}^{1,\infty}([t_k,t_{k+1}]\times \R^d; \R)$ for all $k=0,\dots,K-1$ and $o_{1:k}\in\mathbb{O}_k$. 
\end{proposition}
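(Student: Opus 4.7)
The plan is to argue by induction on $k \in \{0, \dots, K-1\}$. The equations \eqref{eq: global Fokker--Planck with update}--\eqref{eq: FP-update} decouple into two standard operations that can be iterated: a linear parabolic evolution on each subinterval $[t_k, t_{k+1}]$ via the Fokker--Planck PDE, and a pointwise multiplication by the likelihood at the observation times. For the base case $k=0$, the initial datum $\pi_0$ is independent of $o$ and lies in $C_{\mathrm{b}}^\infty(\R^d; \R)$. The Fokker--Planck equation \eqref{eq: Fokker--Planck} on $[t_0, t_1]$ is then a linear parabolic Cauchy problem whose coefficients are smooth with bounded derivatives of every order at least one by condition \eqref{eq: condition smooth}, and which is uniformly elliptic by condition \eqref{eq: condition ellipticity}. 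Classical parabolic theory---for instance via Friedman's construction of the fundamental solution, or via the probabilistic representation of Kolmogorov's forward equation combined with Malliavin calculus---then yields a unique classical solution $p_0 \in C_{\mathrm{b}}^{1,\infty}([t_0, t_1] \times \R^d; \R)$.

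For the inductive step, suppose the conclusion holds for $p_{k-1}$. The update \eqref{eq: FP-update} sets
\begin{align*}
    p_k(t_k, x, o_{1:k}) = p_{k-1}(t_k, x, o_{1:k-1})\, L(o_k, x).
\end{align*}
Since $L(o, \cdot) \in C_{\mathrm{b}}^\infty(\R^d; \R)$ with derivatives bounded uniformly in $o$ (this follows from $L \in C_{\mathrm{b}}^{0,\infty}$ together with \eqref{eq: uniform bound L}), and $p_{k-1}(t_k, \cdot, o_{1:k-1}) \in C_{\mathrm{b}}^\infty(\R^d; \R)$ with bounds uniform in $o_{1:k-1}$ by induction, the Leibniz rule shows that the product lies in $C_{\mathrm{b}}^\infty(\R^d; \R)$ with bounds uniform in $o_{1:k}$. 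Invoking the same parabolic theory on $[t_k, t_{k+1}]$ with this new initial datum delivers a unique classical solution $p_k(\cdot, \cdot, o_{1:k}) \in C_{\mathrm{b}}^{1,\infty}([t_k, t_{k+1}] \times \R^d; \R)$. Because the underlying parabolic estimates depend only on bounds for the (fixed) coefficients and for the initial datum (uniform in $o$), the resulting bounds on $p_k$ are themselves uniform in $o$, so $p_k \in L^\infty(\mathbb{O}; C([t_k, t_{k+1}] \times \R^d; \R))$.

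Measurability of $o \mapsto p_k$ then follows from continuity: the parabolic solution operator is continuous with respect to, say, the $C^2$-norm of the initial datum, and $o \mapsto L(o, \cdot)$ is continuous into $C_{\mathrm{b}}^\infty(\R^d; \R)$, so both the multiplicative update and the parabolic evolution preserve continuity through the iteration. A continuous (hence Borel measurable) dependence on $o$ is therefore propagated step by step.

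The main obstacle is controlling the dependence on $o$ uniformly: the parameter space $\mathbb{O} = \R^{d' \times K}$ is not compact, so one cannot invoke a compactness argument to pass from pointwise to uniform bounds. The uniform likelihood bound \eqref{eq: uniform bound L}, together with the hypothesis $L \in C_{\mathrm{b}}^{0,\infty}$ giving uniform bounds on all $x$-derivatives of $L$, is precisely what closes the induction with $o$-independent constants at each step. Once this point is handled, the remainder of the argument is a routine $K$-step iteration of two standard ingredients, and the full proof is carried out in \cite{baagmark2024convergent}.
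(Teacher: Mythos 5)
First, a point of comparison: the paper does not prove this proposition at all; it is quoted directly from \cite[Proposition~2.1]{baagmark2024convergent}, so there is no in-paper argument to match yours against. Judged on its own terms, your inductive skeleton --- alternate the linear Fokker--Planck evolution on $[t_k,t_{k+1}]$ with pointwise multiplication by $L(o_{k+1},\cdot)$, and propagate $C_{\mathrm b}^\infty$ regularity with $o$-uniform bounds using \eqref{eq: uniform bound L} together with the uniformly bounded $x$-derivatives of $L$ --- is the natural structure, and your identification of the $o$-uniformity (non-compactness of $\mathbb{O}$) as the point requiring care is correct and correctly resolved.

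The genuine gap is in the parabolic step itself: you assert that the Cauchy problem is ``uniformly elliptic by condition \eqref{eq: condition ellipticity}.'' That condition only states that $a(x)=\sigma(x)\sigma(x)^\top$ is invertible for \emph{every} $x$; it provides no uniform lower bound $\inf_{x\in\R^d}\lambda_{\min}(a(x))>0$, and the smallest eigenvalue may degenerate as $\|x\|\to\infty$. The Friedman-type fundamental-solution construction you lean on as your primary route requires genuine uniform ellipticity, so that branch of the argument does not apply under the stated hypotheses. This is exactly why the paper goes out of its way to observe that \eqref{eq: condition ellipticity} implies the parabolic H\"ormander condition \eqref{eq: parabolic hörmander} for the vector fields $V_0,\dots,V_d$ and remarks that \emph{this} is what the quoted regularity result needs: the statement is a hypoellipticity result, and the workable route is the probabilistic/Malliavin one you mention only in passing as an alternative. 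To close the gap you would need either to strengthen the hypothesis to uniform ellipticity, or to carry out the H\"ormander--Malliavin argument showing that the (sub)Markov semigroup maps $C_{\mathrm b}^\infty$ data to functions with all spatial derivatives bounded on $[t_k,t_{k+1}]$ --- which is precisely the content of the cited Proposition~2.1 and is not a routine consequence of classical uniformly parabolic theory.
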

In particular, we note that the filtering densities $p_k(t_k,\cdot, o_{1:k})$ belong to $C_{\mathrm{b}}^{\infty}(\R^d; \R)$ for all $k = 1, \dots, K$ and $o_{1:k} \in \mathbb{O}_k$. 

Our next result is a nonlinear Feynman--Kac formula for the filtering problem, with a BSDE reformulation of the prediction equation \eqref{eq: Fokker--Planck}. In order to formulate it, we introduce an auxiliary drift $b\in C_{\mathrm{b}}^\infty(\R^d;\R^d)$ and process $X$, satisfying 
\begin{align}
\label{eq: auxiliary process}
    X_t
    =
    X_0
    +
    \int_0^t
    b
    (X_s)
    \dd s
    +
    \int_0^t
    \sigma
    (X_s)
    \dd W_s,
    \quad
    t\in [0,T],\ \text{$\PP$-a.s.}
\end{align}
Here, $W$ is a $d$-dimensional Brownian motion independent of $B$ and $X_0\sim \pi_0$. 
We denote by $\pi_t$ the unconditional distribution of $X$ at time $t\in[0,T]$. 
The corresponding generator $A_b$, is defined, for $\varphi\in C^2(\R^d;\R)$, by
\begin{align*}
    A_b\varphi 
    = 
    \frac{1}{2}\sum_{i,j=1}^d a_{ij}\,
    \frac{\partial^2 \varphi}{\partial x_i \partial x_j} 
    + 
    \sum_{i=1}^d 
    b_i \, 
    \frac{\partial \varphi}{\partial x_i}.
\end{align*}
Defining the function $f_b\colon \R^d\times\R\times \R^d \to \R$ by 
\begin{align*} 
\begin{split}
    f_b
    (x,u,v) 
    &= 
    \sum_{i=1}^d 
    \bigg(
    \sum_{j=1}^d
    \frac
    {\partial a_{ij}(x)}
    {\partial x_j }
    \bigg)
    \, v_i
    + 
    \frac{1}{2}
    \sum_{i,j=1}^d 
    \frac{
    \partial^2 
    a_{ij}(x)
    }
    {
    \partial x_i 
    \partial x_j
    }
    \,  u 
    - 
    \sum_{i=1}^d 
    \frac{
    \partial \mu_i(x)
    }
    {
    \partial x_i
    }
    \, u
    \\
    &\hspace{2em}
    - 
    \sum_{i=1}^d
    \mu_i(x) 
    \, v_i
    -
    \sum_{i=1}^d
    b_i(x) 
    \, v_i
    ,
\end{split}
\end{align*}
we see, for $\varphi\in C^2(\R^d;\R)$ and $x\in\R^d$, that
\begin{align} \label{eq: f relation}
    (A^* \varphi)
    (x) 
    &= 
    (A_b\varphi)(x) 
    + 
    f_b
    (x,
    \varphi(x),
    \nabla 
    \varphi(x)
    ).
\end{align}
We note that the introduction of an auxiliary drift $b$ extends the original deep BSDE scheme from \cite{E_2017}, where $b=\mu$ was the only choice. Our extension shares similarities with the approach from \cite{andersson2025deep}. In Section~\ref{section: error analysis} we utilize this freedom by choosing $b$ as
\begin{align}
\label{eq: mu-bar choice}
    b_i
    &=
    -
    \mu_i
    +
    \sum_{j=1}^d
    \frac{\partial a_{ij}}
    {\partial x_j }
    ,
    \quad
    i=1,\dots,d.
\end{align}
For this choice of $b$, the $v$-dependent terms in $f_b$ cancel. We thus obtain a reduced function $f\colon\R^d\times\R\to\R$ defined by
\begin{align} \label{eq: simplified f} 
\begin{split}
    f
    (x,u) 
    &= 
    \frac{1}{2}
    \sum_{i,j=1}^d 
    \frac{
    \partial^2 
    a_{ij}(x)
    }
    {
    \partial x_i 
    \partial x_j
    }
    \,  u 
    - 
    \sum_{i=1}^d 
    \frac{
    \partial \mu_i(x)
    }
    {
    \partial x_i
    }
    \, u
    .
\end{split}
\end{align}
The derivation of the nonlinear Feynman--Kac formula is similar to the classical one in \cite[Proposition 4.3]{BSDE_in_finance}.

\begin{theorem}[Nonlinear Feynman--Kac formula for the filtering problem]
    \label{theorem: BSDE cont formulation}
    Let $p_k$, $k=0,\dots,K$, be the solution to \eqref{eq: global Fokker--Planck with update}--\eqref{eq: FP-update}. 
    For $k\in\{0,\dots,K-1\}$, and $o_{1:k}\in \mathbb{O}_k$, let $(X^k,Y^k,Z^k)$ be the unique process that satisfies, for $t\in[t_k,t_{k+1}]$, $\mathbb{P}$-a.s. 
    \begin{align}
    \begin{split} \label{theorem eq: BSDE cont formulation}
        X_t^k
        &=
        X_{t_k}^k
        +
        \int_{t_k}^t
        b
        (X_s^k)
        \dd s
        +
        \int_{t_k}^t
        \sigma
        (X_s^k)
        \dd W_s,
        \\
        Y_t^k
        &=
        g_k
        (X_{t_{k+1}}^k
        ,
        o_{1:k})
        +
        \int_t^{t_{k+1}}
        f_b(
        X_s^k,
        Y_s^k,
        Z_s^k
        )
        \dd s
        -
        \int_t^{t_{k+1}}
        (Z_s^k)^\top
        \sigma(X_s^k)
        \dd W_s,
    \end{split}
    \end{align}
    where $X_{t_k}^k\sim \pi_{t_k}$ and the terminal condition $g_k$ is defined by
    \begin{align}
    \label{theorem eq: g_k}
        g_k(x,o_{1:k})
        &=
        \begin{cases}
            p_{k-1}
            (t_k,
            x,
            o_{1:k-1})
            L
            (
            o_k,
            x
            ),
            & 
            k\geq 1,
            \\
            \pi_0(x),
            & 
            k=0.
        \end{cases} 
    \end{align}
    Then we have, for $t\in[t_k,t_{k+1}]$, $\mathbb{P}$-a.s.
    \begin{align} \label{theorem eq: BSDE equivalence}
        &\big(
        p_k
        (t,
        X_{t_{k+1}+t_k-t}^k,
        o_{1:k})
        , 
        \nabla
        p_k
        (t,
        X_{t_{k+1}+t_k-t}^k,
        o_{1:k}) 
        \big)
        =
        (
        Y_{t_{k+1}+t_k - t }^k
        ,
        Z_{t_{k+1}+t_k - t }^k
        )
        .
    \end{align}
\end{theorem}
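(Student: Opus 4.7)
The plan is to convert the forward Fokker--Planck equation \eqref{eq: Fokker--Planck} into a backward semilinear parabolic PDE by time reversal, and then apply It\^o's formula to obtain the BSDE, in the spirit of the classical nonlinear Feynman--Kac formula. Fix $k \in \{0,\dots,K-1\}$ and $o_{1:k} \in \mathbb{O}$, and define $\widetilde p_k(t,x) \defeq p_k(t_{k+1}+t_k - t, x, o_{1:k})$ for $t \in [t_k, t_{k+1}]$. Using $A^*\varphi = A\varphi + \widetilde f(\cdot,\varphi,\nabla\varphi)$ and $\partial_t \widetilde p_k(t,x) = -(\partial_s p_k)(t_{k+1}+t_k-t, x, o_{1:k})$, equation \eqref{eq: Fokker--Planck} becomes
\begin{align*}
    \partial_t \widetilde p_k(t,x) + A \widetilde p_k(t,x) + \widetilde f\bigl(x, \widetilde p_k(t,x), \nabla \widetilde p_k(t,x)\bigr) = 0, \quad (t,x) \in [t_k,t_{k+1}) \times \R^d,
\end{align*}
with terminal condition $\widetilde p_k(t_{k+1}, x) = p_k(t_k, x, o_{1:k}) = g_k(x, o_{1:k})$; the latter equality holds by \eqref{eq: global Fokker--Planck with update} for $k=0$ and by \eqref{eq: FP-update} together with \eqref{theorem eq: g_k} for $k \geq 1$.

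Next I would apply It\^o's formula to $\widetilde p_k(t, X_t^k)$ on $[t_k, t_{k+1}]$. Proposition~\ref{prop: properties of p} gives $\widetilde p_k \in C_{\mathrm b}^{1,\infty}([t_k,t_{k+1}] \times \R^d; \R)$, so the formula is justified and yields
\begin{align*}
    \dd \widetilde p_k(t, X_t^k) = (\partial_t + A) \widetilde p_k(t, X_t^k)\, \dd t + \nabla \widetilde p_k(t, X_t^k)^\top \sigma(X_t^k)\, \dd W_t = -\widetilde f\bigl(X_t^k, \widetilde p_k, \nabla \widetilde p_k\bigr)\, \dd t + \nabla \widetilde p_k(t, X_t^k)^\top \sigma(X_t^k)\, \dd W_t.
\end{align*}
Setting $Y_t^k \defeq \widetilde p_k(t, X_t^k)$ and $Z_t^k \defeq \sigma(X_t^k)^\top \nabla \widetilde p_k(t, X_t^k)$ and using the ellipticity condition \eqref{eq: condition ellipticity}, one has $\nabla \widetilde p_k(t,X_t^k) = (\sigma\sigma^\top)^{-1}(X_t^k)\sigma(X_t^k)\, Z_t^k$, so \eqref{eq:definition-function-f} gives $\widetilde f(X_t^k, Y_t^k, \nabla \widetilde p_k) = f(X_t^k, Y_t^k, Z_t^k)$. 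Integrating from $t$ to $t_{k+1}$ and using $Y_{t_{k+1}}^k = g_k(X_{t_{k+1}}^k, o_{1:k})$ yields precisely the backward equation in \eqref{theorem eq: BSDE cont formulation}.

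Uniqueness of $(X^k, Y^k, Z^k)$ on $[t_k,t_{k+1}]$ reduces to the classical Pardoux--Peng theory: the forward-backward system is decoupled, the coefficients $\mu, \sigma$ are bounded and Lipschitz by \eqref{eq: condition smooth}, the driver $f$ inherits global Lipschitz continuity in $(u,v)$ uniformly in $x$ from the boundedness of $\sigma$, $(\sigma\sigma^\top)^{-1}$ and the derivatives of $\mu$ and $a$, and the terminal datum $g_k(\cdot, o_{1:k})$ is bounded and smooth by Proposition~\ref{prop: properties of p} and \eqref{eq: uniform bound L}. Finally, substituting $\tau = t_{k+1}+t_k - t$ in the definitions of $Y^k$ and $Z^k$ gives the identification \eqref{theorem eq: BSDE equivalence}. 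I do not anticipate a substantive obstacle: the regularity required to justify It\^o's formula is supplied by Proposition~\ref{prop: properties of p}, and the only subtlety is keeping the time-reversal bookkeeping straight so that the sign in $\partial_t \widetilde p_k$ reproduces the driver $f$ (and not $-f$) in the BSDE.
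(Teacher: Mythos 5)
Your proposal is correct and follows essentially the same route as the paper's own proof: time-reverse $p_k$ via $t\mapsto t_{k+1}+t_k-t$, use $A^*=A+\widetilde f$ to obtain the backward semilinear PDE with terminal datum $g_k$, apply It\^o's formula (justified by Proposition~\ref{prop: properties of p}), set $Y_t^k=\widetilde p_k(t,X_t^k)$ and $Z_t^k=\sigma(X_t^k)^\top\nabla\widetilde p_k(t,X_t^k)$, and invoke standard uniqueness for the decoupled FBSDE. The sign bookkeeping and the conversion $\widetilde f(x,u,\nabla\widetilde p_k)=f(x,u,Z)$ via $(\sigma\sigma^\top)^{-1}\sigma Z=\nabla\widetilde p_k$ are both handled correctly.
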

\begin{proof}
    We fix $k\in\{0,\dots,K\}$, $o_{1:k}\in\mathbb{O}_k$, and omit the $o$-notation whenever suitable. We introduce the reparameterized solution $q_k(t) = p_k(t_{k+1}+t_k-t)$, for $t\in [t_k,t_{k+1}]$. By \eqref{eq: f relation} and \eqref{eq: Fokker--Planck}--\eqref{eq: FP-update} it follows that $q_k$, for $t\in[t_k,t_{k+1}]$ and $x\in\R^d$, satisfies
    \begin{align*}
        \frac{\partial}
        {\partial t}
        q_k
        (t,x)
        +
        A_b
        q_k
        (t,x)
        &=
        -
        f_b
        \big(
        x,
        q_k
        (t,x),
        \nabla 
        q_k
        (t,x)
        \big)
        ,
        \\
        q_k(t_{k+1},x,o_{1:k})
        &=
        g_k
        (x,o_{1:k})
        .
    \end{align*}
    By Proposition~\ref{prop: properties of p}, the solution $p_k$ belongs to $C_{\mathrm{b}}^{1,2}([t_k,t_{k+1}]\times\R^d;\R)$ and thus so does $q_k$. We use Itô's formula for $q_k(t,X_t)$, $t\in[t_k,t_{k+1}]$, and $s\in[t_k,t]$, to obtain $\mathbb{P}$-a.s.\
    \begin{align*}
        q_k
        (t,
        X_t)
        &=
        q_k
        (s,
        X_{s})
        +
        \int_{s}^t
        \Big(
        \frac{\partial}{\partial u}
        q_k
        (u,
        X_u)
        +
        A_b
        q_k
        (u,
        X_u)
        \Big)
        \dd u
        +
        \int_{s}^t
        \nabla
        q_k
        (u,
        X_u)^\top 
        \sigma(
        X_u)
        \dd W_u
        \\
        &=
        q_k
        (s,
        X_{s})
        -
        \int_{s}^t
        f_b
        \big(
        X_u,
        q_k
        (u,
        X_u),
        \nabla 
        q_k
        (u,
        X_u)
        \big)
        \dd u
        +
        \int_{s}^t
        \nabla
        q_k
        (u,
        X_u)^\top 
        \sigma(
        X_u)
        \dd W_u.
    \end{align*}
    Defining $Y_t = q_k(t,X_t)$ and $Z_t = \nabla q_k(t,X_t) $ we get
    \begin{align*}
        Y_t
        &=
        Y_{s}
        -
        \int_{s}^t
        f_b(
        X_u,
        Y_u,
        Z_u
        )
        \dd u
        +
        \int_{s}^t
        Z_u^\top
        \sigma(X_u)
        \dd W_u.
    \end{align*}
    In particular, with $t=t_{k+1}$ we have 
    \begin{align*}
        Y_{t_{k+1}}
        =
        Y_s
        -
        \int_s^{t_{k+1}}
        f_b(
        X_u,
        Y_u,
        Z_u
        )
        \dd u
        +
        \int_s^{t_{k+1}}
        Z_u^\top
        \sigma(X_u)
        \dd W_u
        ,\quad
        s\in[t_k,t_{k+1}].
    \end{align*}
    Since $Y$ satisfies the terminal condition $Y_{t_{k+1}} = g_{k}(X_{t_{k+1}},o_{1:k})$, we can write the equations for $(X,Y,Z)$ as an uncoupled Forward Backward Stochastic Differential Equation (FBSDE) in the interval $[t_k,t_{k+1}]$,
    \begin{align*} 
        X_t
        &=
        X_{t_k}
        +
        \int_{t_k}^t
        b
        (X_s)
        \dd s
        +
        \int_{t_k}^t
        \sigma
        (X_s)
        \dd W_s,
        \\
        Y_t
        &=
        g_{k}
        (X_{t_{k+1}}
        ,
        o_{1:k})
        +
        \int_t^{t_{k+1}}
        f_b(
        X_s,
        Y_s,
        Z_s
        )
        \dd s
        -
        \int_t^{t_{k+1}}
        Z_s^\top
        \sigma(X_s)
        \dd W_s.
    \end{align*}
    Under conditions \eqref{eq: condition smooth} and \eqref{eq: condition hörmander}, this FBSDE has a unique solution, which we denote $(X^k,Y^k,Z^k)$ and  \eqref{theorem eq: BSDE equivalence} is satisfied by construction, see \cite[Proposition 4.3]{BSDE_in_finance}.
\end{proof}
We note that, while $X = (X^k)_{k=0}^{K-1}$ is defined in the entire interval $[0,T]$ as a continuous solution, the processes $(Y^k,Z^k)_{k=0}^{K-1}$ are different processes defined in separate intervals. In the following, we denote by $(X^{k,x},Y^{k,x},Z^{k,x})$ the process $(X^k,Y^k,Z^k)$ conditioned on $X_{t_k}^k = x$, thus introducing a dependence on $x$ also in $(Y^k,Z^k)$.  By the stochastic representation of $p$ in Theorem~\ref{theorem: BSDE cont formulation} the following deterministic representation is easily obtained.
\begin{corollary}
\label{corollary: feynman--kac formula}
    Assume the setting of Theorem~\ref{theorem: BSDE cont formulation}. For $k=0,\dots,K-1$, $o_{1:k}\in\mathbb{O}_k$, it holds
    \begin{align*}
        p_k
        (t_{k+1},
        x,
        o_{1:k}
        )
        &=
        \E
        \Bigg[
        g_k(
        X_{t_{k+1}}^{k,x},
        o_{1:k}
        )
        +
        \int_{t_k}^{t_{k+1}}
        f_b
        (
        X_s^{k,x},
        Y_s^{k,x},
        Z_s^{k,x}
        )
        \dd s
        \Bigg]
        ,\quad
        x\in\R^d
        .
    \end{align*}
\end{corollary}
\begin{proof}
    We begin by letting $k\in\{0,\dots,K-1\}$, $o_{1:k}\in\mathbb{O}_k$, and $t={t_{k+1}}$. By taking the conditional expectation with respect to $X_{t_k}^k = x$ in the first component of \eqref{theorem eq: BSDE equivalence}, we obtain
    \begin{align*}
        p_k
        (t_{k+1},
        x,
        o_{1:k}
        )
        &=
        \E
        \big[
        Y_{t_k}^{k}
        \mid
        X_{t_k}^k
        =
        x
        \big]
        \\
        &=
        \E
        \Bigg[
        g_k(
        X_{t_{k+1}}^{k},
        o_{1:k}
        )
        +
        \int_{t_k}^{t_{k+1}}
        f_b
        (
        X_s^{k},
        Y_s^{k},
        Z_s^{k}
        )
        \dd s
        \,\Bigg\vert 
        X_{t_k}^k
        =
        x
        \Bigg]
        \\
        &=
        \E
        \Bigg[
        g_k(
        X_{t_{k+1}}^{k,x},
        o_{1:k}
        )
        +
        \int_{t_k}^{t_{k+1}}
        f_b
        (
        X_s^{k,x},
        Y_s^{k,x},
        Z_s^{k,x}
        )
        \dd s
        \Bigg].
    \end{align*}
    This completes the proof.
\end{proof}
\subsection{Optimization formulation}
The BSDE formulation \eqref{theorem eq: BSDE cont formulation} introduces a new possibility for approximating the prediction step probabilistically. To this end, we reformulate \eqref{theorem eq: BSDE cont formulation} as an equivalent optimization problem. This  means that we extend the minimization problem of the original deep BSDE formulation in \cite{E_2017} to also include observations. 
\begin{proposition}
    \label{prop: continuous BSDE minimization}
    Assume the setting of Theorem~\ref{theorem: BSDE cont formulation} and let $O\colon\Omega \to \mathbb{O}$ be random observations, independent of $X$, and whose distribution is determined by \eqref{introduction eq: obs}.
    The solution $p_k$ to \eqref{eq: global Fokker--Planck with update}--\eqref{eq: FP-update}, is then given by $p_k(t) = u_k^*(t_{k+1}+t_k-t)$, for  $k=0,\dots,K-1$, $t\in[t_k,t_{k+1}]$, where $u_k^*$ is the solution to the minimization problem
    \begin{align*} 
        &\hspace{-4em}
        \underset{u\in 
        L^\infty(\mathbb{O}_k;
        C([t_k,t_{k+1}]
        \times\R^d
        ;\R))}{\mathrm{min}}\  
        \E
        \Bigg[
        \Big|
        u(t_{k+1},
        X_{t_{k+1}}^k,
        O_{1:k}
        )
        -
        g_k
        (X_{t_{k+1}}^k,
        O_{1:k})
        \Big|^2
        \Bigg]
        \\
        X_t^k
        &=
        X_{t_k}^k
        +
        \int_{t_k}^t
        b
        (X_s^k)
        \dd s
        +
        \int_{t_k}^t
        \sigma
        (X_s^k)
        \dd W_s
        ,\quad
        t\in[t_k,t_{k+1}],
        \\
        u(t,X_t^k, O_{1:k})
        &=
        u(t_k,X_{t_k}^k, O_{1:k})
        -
        \int_{t_k}^{t}
        f_b
        \big(
        X_s^k,
        u(s,X_s^k, O_{1:k}),
        \nabla 
        u(s,X_s^k, O_{1:k})
        \big)
        \dd s
        \\
        &\hspace{8em}+
        \int_{t_k}^{t}
        \nabla u(s,X_s^k, O_{1:k})^{\top}
        \sigma
        (X_s^k)
        \dd W_s
        ,\quad
        t\in[t_k,t_{k+1}].
    \end{align*}
\end{proposition}
\begin{proof}
    Follows directly by substitution and Theorem~\ref{theorem: BSDE cont formulation}.
\end{proof}
Next, we formulate an equivalent optimization problem by splitting $u$ and $\nabla u$ into two separate functions $w$ and $v$, where $w= u(t_{k})$ and $v = \nabla u$ over $[t_k,t_{k+1}]$. As we shall see, this makes the discretization schemes introduced below feasible for a more practical implementation that avoids multiple layers of automatic differentiation. The new optimization problem reads as follows.
\begin{align} \label{prop eq: continuous BSDE minimization split}
\begin{split} 
    &\hspace{-4em}
    \underset{
    \substack{
    w\in 
    L^\infty(\mathbb{O}_k;
    C(
    \R^d
    ;\R))
    \\
    v\in 
    L^\infty(\mathbb{O}_k;
    C([t_k,t_{k+1}]
    \times\R^d
    ;\R^d))
    }
    }
    {\mathrm{min}}\  
    \E
    \Bigg[
    \Big|
    Y_{t_{k+1}}^{k}
    -
    g_k
    (X_{t_{k+1}}^k,
    O_{1:k})
    \Big|^2
    \Bigg]
    \\
    X_t^k
    &=
    X_{t_k}^k
    +
    \int_{t_k}^t
    b
    (X_s^k)
    \dd s
    +
    \int_{t_k}^t
    \sigma
    (X_s^k)
    \dd W_s
    ,\quad
    t\in[t_k,t_{k+1}],
    \\
    Y_{t}^{k}
    &=
    w(X_{t_k}^k, O_{1:k})
    -
    \int_{t_k}^{t}
    f_b
    \big(
    X_s^k,
    Y_{s}^{k},
    v(s,
    X_s^k, 
    O_{1:k})
    \big)
    \dd s
    \\
    &\hspace{8em}+
    \int_{t_k}^{t}
    v(s,
    X_s^k, 
    O_{1:k})^\top
    \sigma
    (X_s^k)
    \dd W_s
    ,\quad
    t\in[t_k,t_{k+1}].
\end{split}
\end{align}
It is easy to see that, for $k=\{0,\dots,K-1\}$, $(u_k^*(t_{k}),\nabla u_k^*)$, where $u_k^*$ is the solution from Proposition~\ref{prop: continuous BSDE minimization}, solves \eqref{prop eq: continuous BSDE minimization split}. 

At this stage, we have obtained an optimization problem over both time, space, and observation space. Next, we introduce the Euler--Maruyama approximations $\mathcal{X}$ and $\mathcal{Y}$ of $X$ and $Y$, respectively. Define a finer time partition $\{ t_{k,n} \}_{n=0, k=0}^{N,K-1} $ of the interval $[0,T]$ with 
\begin{align*}
    t_{k}=t_{k,0} < \dots < t_{k,n} < \dots < t_{k,N}=t_{k+1}, \ k=0, \dots, K-1, \ n=0, \dots N-1.
\end{align*}
The remaining discretization step of the original deep BSDE method \cite{E_2017} is to fix the time grid for some $N>0$ and approximate the time-continuous function $v$ with $\overline{v}= (\overline{v}_n)_{n=0}^{N-1}$, $\overline{v}_i\colon \R^d\times \mathbb{O}_k\to \R^d$, defined at the discrete time points $(t_{k,n})_{n=0}^{N-1}$, $k=0,\dots,K-1$, such that $\overline{v}_n\approx v(t_{k,n})$. We obtain the following discrete problem
\begin{align} \label{eq: discrete BSDE minimization}
    &\hspace{0em}
    \underset{
    \substack{
    w\in 
    L^\infty(\mathbb{O}_k;
    C(\R^d
    ;\R))
    \\
    (\overline{v}_n)_{n=0}^{N-1}\in 
    \prod_{n=0}^{N-1}
    L^\infty(\mathbb{O}_k;
    C(\R^d
    ;\R^d))
    }}
    {\mathrm{min}}\  
    \E
    \Bigg[
    \Big|
    \mathcal{Y}_{N}^{k}
    -
    \overline{g}_k
    (\mathcal{X}_{N}^k,
    O_{1:k})
    \Big|^2
    \Bigg]
    \\
\begin{split} \notag
    \mathcal{X}_{n+1}^k
    &=
    \mathcal{X}_{n}^k
    +
    b
    (\mathcal{X}_{n}^k)
    (t_{k,n+1}-t_{k,n})
    +
    \sigma
    (\mathcal{X}_{n}^k)
    (W_{t_{k,n+1}}-W_{t_{k,n}})
    ,\quad
    n=0,\dots,N-1,
    \\[1.0\jot]
    \mathcal{Z}_{n}^{k}
    &=
    \overline{v}_{n}
    (\mathcal{X}_{n}^k, O_{1:k})
    ,\hspace{18.5em}
    n=0,\dots,N-1,
    \\[-1.0\jot]
    \mathcal{Y}_{n+1}^{k}
    &=
    w(\mathcal{X}_{0}^k, O_{1:k})
    -
    \sum_{\ell=0}^n
    \Big(
    f_b
    \big(
    \mathcal{X}_{\ell}^k,
    \mathcal{Y}_{\ell}^{k},
    \mathcal{Z}_{\ell}^{k}
    \big)
    (t_{k,\ell+1}-t_{k,\ell})
    \\
    &\hspace{9.5em}-
    \big(\mathcal{Z}_{\ell}^{k}\big)^\top
    \sigma
    (\mathcal{X}_{\ell}^k)
    (W_{t_{k,\ell+1}}-W_{t_{k,\ell}})
    \Big)
    ,\quad
    n=0,\dots,N-1,
\end{split}
\end{align}
where $\overline{g}_k$ is defined, for $x\in\R^d$ and $o_{1:k}\in\mathbb{O}_k$, from the previous optimum $w_{k-1}^*$
\begin{align} 
\label{eq: overline-g_k}
    \overline{g}_k(x,o_{1:k})
    &=
    \begin{cases}
        w_{k-1}^*
        (x,o_{1:k-1})
        L(o_k,x),
        & 
        k\geq 1,
        \\
        \pi_0(x),
        & 
        k=0.
    \end{cases} 
\end{align}
We define our filter approximation $\widehat{p} = (\widehat{p}_k)_{k=1}^K$, of the exact filter $(p_k(t_k))_{k=1}^K$, by
\begin{align} \label{eq: deep bsde filter}
    \widehat{p}_k
    (x,o_{1:k})
    &=
    w_{k-1}^*
    (x,o_{1:k-1})
    L(o_k,x)
    ,\quad
    o_{1:k}\in \mathbb{O}_k
    ,\, x\in \R^d,
\end{align}
where $(w_k^*)_{k=0}^{K-1}$ are the solutions obtained from the discrete formulation \eqref{eq: discrete BSDE minimization}. See Figure~\ref{fig: graphical abstract} for a flowchart of the methodology to obtain $\widehat{p}$.

\begin{figure}[h]
\scalebox{0.70}{
\tikzset{
    mycircle1/.style={
        circle, draw, minimum size=1.0cm, font=\LARGE, align=center, color=blue
    },
    mycircle/.style={
        circle, draw, minimum size=1.0cm, font=\Huge, align=center
    },
    mysquare/.style={
        rectangle, draw, minimum size=1.0cm, font=\large, align=center, color=red
    },
    myarrow/.style={-{Stealth}, line width = 0.3mm}
}

\begin{tikzpicture}[node distance=3.5cm and 2cm]

\def\s{2.2}

%

\node (p0) at (0*\s, 0) {$p_0$};

\node[mysquare] (box0) at (1*\s, 0) {Deep \\ BSDE};
\node (L0) at (2*\s, 1*\s) {$L$};
\node[mycircle1] (mult0) at (2*\s, 0) {$\times$};

\node[mysquare] (box1) at (3*\s, 0) {Deep \\ BSDE};
\node (dots) at (4.25*\s, 0) {$\dots$};

\node (boxk) at (5.5*\s, 0) {};
\node (Lk1) at (5.5*\s, 1*\s) {$L$};
\node[mycircle1] (multk1) at (5.5*\s, 0) {$\times$};
\node[mysquare] (boxk1) at (6.5*\s, 0) {Deep \\ BSDE};
\node (moredots) at (7.75*\s, 0) {};


\coordinate (w0streck) at ($(box0)!0.5!(mult0)$);

\draw[dashed] (-0.25*\s,-0.5) -- (-0.25*\s,-1.9);
\draw[dashed] (1.5*\s,-0.5) -- (1.5*\s,-1.9);
\node (train0) at (0.625*\s,-1.2) {Training 0};
\draw[dashed] (3.5*\s,-0.5) -- (3.5*\s,-1.9);
\node (train1) at (2.5*\s,-1.2) {Training 1};

\draw[dashed] (5*\s,-0.5) -- (5*\s,-1.9);
\draw[dashed] (7*\s,-0.5) -- (7*\s,-1.9);
\node (traink) at (6*\s,-1.2) {Training $k$};

\draw[myarrow] (p0) -- (box0) node[midway, above] {};
\draw[myarrow] (box0) -- (mult0) node[midway, above] {$w_0^*$};
\draw[myarrow] (L0) -- (mult0) node[midway, above] {};
\draw[myarrow] (mult0) -- (box1) node[midway, above] {$\widehat{p}_1$};
\draw[myarrow] (box1) -- ($(dots)+(-1,0)$) node[midway, above] {$w_1^*$};
\draw[myarrow] ($(dots)+(1,0)$) -- (multk1) node[midway, above] {$w_k^*$};
\draw[myarrow] (Lk1) -- (multk1) node[midway, above] {};
\draw[myarrow] (multk1) -- (boxk1) node[midway, above] {$\widehat{p}_{k}$};
\draw[myarrow] (boxk1) -- ($(moredots)+(-1,0)$) node[midway, above] {$w_{k}^*$};





\node[draw, line width= 0.6mm, minimum width=13cm, minimum height=3cm, below=2.0cm of boxk1, align=left, xshift=-5cm, font=\large, color = red, dashed] (bigbox) {};
    \coordinate (tk)  at ($(bigbox.west)+(2.2,0.0)$);
    \coordinate (tkp1) at ($(bigbox.east)+(-2.2,0.0)$);
    \coordinate (right_axis) at ($(bigbox.east)+(-1.0,0.0)$);
    
    \coordinate (rpoints) at ($(tkp1.east)+(-3.2,0.0)$);

    \draw[thick] (tk) ++ (3.4,0) -- (tk);
    \draw[dashed, thick] (tk) ++ (3.4,0) -- (rpoints);
    \draw[->, thick] (tkp1) ++ (-3.4,0) -- (right_axis);
    
    \draw[thick] (tk) ++(0,-0.2) -- ++(0,0.4);
    \draw[thick] (tkp1) ++(0,-0.2) -- ++(0,0.4);
    
    \foreach \i in {1,2,4,5} {
        \draw ($(tk)!{\i/6}!(tkp1)$) ++(0,-0.1) -- ++(0,0.2);
    }
    
    \node[below=0.2cm of tk, font=\Large] {$t_{k,0}$};
    \node[below=0.2cm of tk, xshift=1.5cm, font=\Large] {$t_{k,1}$};
    \node[below=0.2cm of tkp1, xshift=-1.4cm, font=\Large] {$t_{k,N-1}$};
    \node[below=0.2cm of tkp1, font=\Large] {$t_{k,N}$};
    
    \node[above=0.3cm of tk, font=\Large] {$ \mathcal{Y}_{0}^{O_{1:k}} = w_k^*(\mathcal{X}_{0}^k,\, O_{1:k})$};
    \node[above=0.3cm of tkp1, font=\Large] {$\mathcal{Y}_{N}^{O_{1:k}} \approx \widehat{p}_{k}(\mathcal{X}_{N}^k,\, O_{1:k})$};

\draw[dashed, line width= 0.6mm, color=red] (boxk1) -- ($(bigbox) + (5,1.5)$);

\end{tikzpicture}}
\vspace{0pt}
\caption{The figure illustrates the flowchart of the chain of prediction and update steps throughout the method. Each Deep BSDE box represents a BSDE which is solved through the optimization problem \eqref{eq: discrete BSDE minimization}.} 
\label{fig: graphical abstract}
\end{figure}
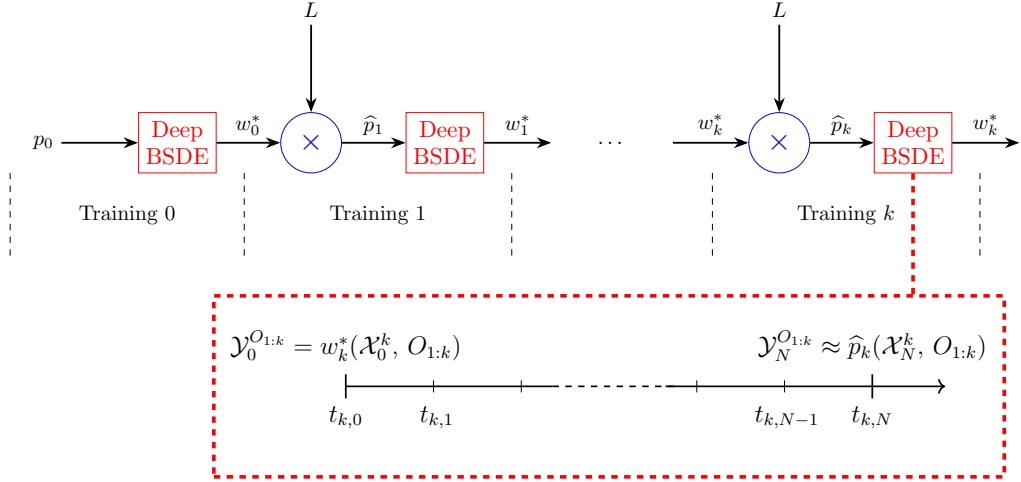

\section{Error analysis}
\label{section: error analysis}
In this section, we quantify how discretization and optimization errors propagate through the recursive filtering scheme by studying a strong error. To simplify the notation, we fix the time grid to satisfy, for all $k=0,\dots,K-1$, $n=0,\dots,N-1$,  $t_{k,n+1}-t_{k,n} = \frac{T}{KN} \eqdef \tau$. We begin Section~\ref{section: useful lemmas} by reciting some results that we need. In Section~\ref{section: theoretical convergence proof} we state the main result and give the corresponding proof. Throughout the section, we assume \eqref{eq: condition smooth} and \eqref{eq: condition hörmander}.

\subsection{Relevant inequalities} 
\label{section: useful lemmas}
Let $(X,Y,Z)$ be the solution to \eqref{theorem eq: BSDE cont formulation} and $(\widetilde{X},\widetilde{Y},\widetilde{Z}) = (\widetilde{X}^k,\widetilde{Y}^k,\widetilde{Z}^k)_{k=0}^{K-1}$ be the solution, for $k\in\{0,\dots,K-1\}$, $o_{1:k}\in \mathbb{O}_k$, $t\in[t_k,t_{k+1}]$, to 
\begin{align}
\begin{split} \label{eq: auxiliary BSDE formulation}
    \widetilde{X}_t^k
    &=
    \widetilde{X}_{t_k}^k
    +
    \int_{t_k}^t
    b
    (\widetilde{X}_s^k)
    \dd s
    +
    \int_{t_k}^t
    \sigma
    (\widetilde{X}_s^k)
    \dd W_s,
    \\
    \widetilde{Y}_t^k
    &=
    \overline{g}_k
    (\widetilde{X}_{t_{k+1}}^k
    ,
    o_{1:k})
    +
    \int_t^{t_{k+1}}
    f_b(
    \widetilde{X}_s^k,
    \widetilde{Y}_s^k,
    \widetilde{Z}_s^k
    )
    \dd s
    -
    \int_t^{t_{k+1}}
    \big(\widetilde{Z}_s^k\big)^\top
    \sigma(\widetilde{X}_s^k)
    \dd W_s. 
\end{split}
\end{align}
This system of equations is the same as \eqref{theorem eq: BSDE cont formulation} except for the terminal conditions of the $\widetilde{Y}^k$ processes and we note that $X=\widetilde{X}$. By the \emph{a priori} estimate \cite[Proposition~2.1]{BSDE_in_finance}, there exists a constant $C_1>0$ so that
\begin{align}
\label{eq: a priori BSDE Y}
    \sup_{t\in [t_k,t_{k+1}]}
    \E
    \Big[
    \big|
    Y_t^k
    -
    \widetilde{Y}_t^k
    \big|^2
    \Big]^{\frac{1}{2}}
    &\leq
    C_1
    \E
    \Big[
    \big|
    g_k(
    X_{t_{k+1}}^k,
    o_{1:k}
    )
    -
    \overline{g}_k(
    X_{t_{k+1}}^k
    ,
    o_{1:k}
    )
    \big|^2
    \Big]^{\frac{1}{2}}
    .
\end{align}
We remark that in \cite{BSDE_in_finance} the $Z^k$-component (and identically $\widetilde{Z}^k$) is defined such that the diffusion coefficient $\sigma(X^k)$ is absorbed into $Z^k$. Consequently, the corresponding BSDE in \cite{BSDE_in_finance} is written in a form that differs from \eqref{eq: auxiliary BSDE formulation}
by this convention. When adapting the \emph{a priori} estimates from \cite{BSDE_in_finance} to our setting, the process denoted by $Z^\top$ in \cite{BSDE_in_finance} should therefore be identified with $(Z^k)^\top \sigma(X^k)$ in our notation.

In \cite{han_convergence} the authors study the convergence of the deep BSDE method for more general coupled FBSDEs. The most restrictive condition in \cite{han_convergence} is a weak coupling condition. This condition is trivially satisfied in our uncoupled framework and more generally it is easy to verify that conditions \eqref{eq: condition smooth} and \eqref{eq: condition hörmander} are sufficient for \cite[Theorem~1]{han_convergence}. The result states that, up to a constant, the approximation error is bounded by the uniform time step $\tau$ and the error at the terminal condition, thus a hybrid \emph{a priori} and \emph{a posteriori} error bound. To state the result, applied to our setting, we introduce some notation. For $k\in\{0,\dots,K-1\}$ we define the piecewise constant processes
\begin{align*}
    (
    \mathcal{X}_{t}^k,
    \mathcal{Y}_{t}^k,
    \mathcal{Z}_{t}^k
    )
    =
    (
    \mathcal{X}_{n}^k,
    \mathcal{Y}_{n}^k,
    \mathcal{Z}_{n}^k
    )
    ,\quad 
    t\in[t_{k,n},t_{k,n+1}), 
    \, 
    n\in \{0,\dots,N-1\},
\end{align*}
where $(\mathcal{X}^k,\mathcal{Y}^k,\mathcal{Z}^k)$ are the solutions to \eqref{eq: discrete BSDE minimization}. According to \cite[Theorem~1]{han_convergence}, there exists a constant $C$ independent of $\tau$ such that
\begin{align} 
    \label{theorem eq: deep bsde bound}
    &\sup_{t \in [t_k,t_{k+1}]}
    \Big(
    \E 
    \big\| 
    \widetilde{X}_t^k 
    - 
    \mathcal{X}^k_t 
    \big\|^2 
    + 
    \E 
    \big| 
    \widetilde{Y}_t^k 
    - 
    \mathcal{Y}^k_t 
    \big|^2 
    \Big)
    \leq 
    C 
    \Big(
    \tau
    + 
    \E
    \big| 
    \overline{g}_k(
    \mathcal{X}^k_{N},o_{1:k}
    ) 
    - 
    \mathcal{Y}^k_N 
    \big|^2 
    \Big).
\end{align}
From \eqref{theorem eq: deep bsde bound} there exists $C_2>0$ such that for all $k\in\{0,\dots,K-1\}$, $o_{1:k}\in\mathbb{O}_k$, we have
\begin{align} 
\label{collorary eq: deep bsde bound square root Y}
    \sup_{t \in [t_k,t_{k+1}]}
    \E 
    \Big[
    \big| 
    \widetilde{Y}_t^k 
    - 
    \mathcal{Y}^k_t 
    \big|^2 
    \Big]^\frac{1}{2}
    &\leq
    C_2 
    \Bigg( 
    \tau^\frac{1}{2}
    + 
    \E
    \Big[
    \big| 
    \overline{g}_k(
    \mathcal{X}^k_{N},
    o_{1:k}
    ) 
    - 
    \mathcal{Y}^k_N 
    \big|^2 
    \Big]^\frac{1}{2}
    \Bigg).
\end{align}
We note that \cite{han_convergence} adopts the same convention for the process
$Z$ as \cite{BSDE_in_finance}, so the same adaptation applies.
\subsection{Convergence}
\label{section: theoretical convergence proof}
We are now ready to state our main result, building on the \emph{a priori} estimate \eqref{eq: a priori BSDE Y} and error bound \eqref{collorary eq: deep bsde bound square root Y}. Similar to \eqref{theorem eq: deep bsde bound} our theorem is a hybrid \emph{a priori} and \emph{a posteriori} bound. The \emph{a posteriori} term is directly related to the objective function in \eqref{eq: discrete BSDE minimization}.
\begin{theorem}
\label{main theorem: error bound}
    Let $p_k$, $k=1,\dots,K$, be the solution to \eqref{eq: global Fokker--Planck with update}--\eqref{eq: FP-update}, $\widehat{p}_k$, $k=1,\dots,K$, be given by \eqref{eq: deep bsde filter}, $\mathcal{X}^k$ and $\mathcal{Y}^k$ be defined as in \eqref{eq: discrete BSDE minimization}. If $b$ satisfies \eqref{eq: mu-bar choice}, then there exists a constant $C>0$, depending on $f,~T,~K,~C_1,~C_2$ and $L$, such that
    \begin{align}
    \begin{split}
        \label{main theorem eq: error bound}
        &\hspace{0.0em}
        \sup_{k\in\{1,\dots,K\}}
        \big\|
        p_{k}(t_k)
        -
        \widehat{p}_{k}
        \big\|_{L^\infty(\mathbb{O}_k;L^\infty(\R^d;\R))}
        \\
        &\hspace{7.5em}
        \leq
        C
        \Bigg(
        \tau^{\frac{1}{2}}
        +
        \sum_{k=0}^{K-1} 
        \sup_{o_{1:k}\in\mathbb{O}_k}
        \sup_{x \in \R^d}
        \E 
        \Big[
        \big| 
        \overline{g}_{k}(
        \mathcal{X}_N^{k,x},
        o_{1:k}
        ) 
        - 
        \mathcal{Y}_N^{k,x}
        \big|^2 
        \Big]^{\frac{1}{2}}
        \Bigg)
        .
    \end{split}
    \end{align}
\end{theorem}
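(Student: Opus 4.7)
The strategy is to reduce the theorem to a single uniform bound on the prediction-density approximation errors $E_k$, and then iterate a Grönwall-type recursion. The key structural observation is that the update step is purely multiplicative: for $k\ge 1$ we have $p_k(t_k,x,o_{1:k})-\widehat{p}_k(x,o_{1:k})=\bigl(p_{k-1}(t_k,x,o_{1:k-1})-w_{k-1}^*(x,o_{1:k-1})\bigr)L(o_k,x)$, which by the uniform bound \eqref{eq: uniform bound L} on $L$ reduces the left-hand side of \eqref{main theorem eq: error bound} to a control on
\begin{align*}
E_k \defeq \sup_{o\in\mathbb{O}}\sup_{x\in\R^d}\bigl|p_k(t_{k+1},x,o_{1:k})-w^*_k(x,o_{1:k})\bigr|,\qquad k=0,\dots,K-1.
\end{align*}

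\textbf{Step 1: single-step decomposition.} Fix $k$, $o$, and $x$. Theorem~\ref{theorem: BSDE cont formulation} applied at $t=t_{k+1}$ identifies $p_k(t_{k+1},x,o_{1:k})=Y^{k,x}_{t_k}$, a deterministic quantity. On the discrete side, the construction in \eqref{eq: discrete BSDE minimization} gives $\mathcal{Y}_0^{k,x}=w_k^*(x,o_{1:k})$. Hence
\begin{align*}
\bigl|p_k(t_{k+1},x,o_{1:k})-w^*_k(x,o_{1:k})\bigr|\le\E\bigl[|Y_{t_k}^{k,x}-\mathcal{Y}_0^{k,x}|^2\bigr]^{1/2}.
\end{align*}
Inserting the auxiliary process $\widetilde{Y}^{k,x}$ from \eqref{eq: auxiliary BSDE formulation} and applying \eqref{eq: a priori BSDE Y} to $|Y-\widetilde{Y}|$ and \eqref{collorary eq: deep bsde bound square root Y} to $|\widetilde{Y}-\mathcal{Y}|$ yields
\begin{align*}
\E\bigl[|Y_{t_k}^{k,x}-\mathcal{Y}_0^{k,x}|^2\bigr]^{1/2}
\le C_1\,\E\bigl[|(g_k-\overline g_k)(X_{t_{k+1}}^{k,x},o_{1:k})|^2\bigr]^{1/2}+C_2\Bigl(\tau^{1/2}+\E\bigl[|\overline g_k(\mathcal{X}_N^{k,x},o_{1:k})-\mathcal{Y}_N^{k,x}|^2\bigr]^{1/2}\Bigr).
\end{align*}

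\textbf{Step 2: recursion and iteration.} For $k\ge 1$ the terminal discrepancy factorises as $g_k-\overline g_k=(p_{k-1}(t_k,\cdot,o_{1:k-1})-w_{k-1}^*(\cdot,o_{1:k-1}))L(o_k,\cdot)$, so $|g_k-\overline g_k|(x,o)\le C(L)\,E_{k-1}$ pointwise; for $k=0$ both terminal conditions equal $\pi_0$ and this coupling vanishes. Taking $\sup_{o,x}$ in the previous display and writing $A_j$ for the a-posteriori supremum on the right of \eqref{main theorem eq: error bound}, we obtain
\begin{align*}
E_k\le C_1C(L)\,E_{k-1}+C_2\bigl(\tau^{1/2}+A_k\bigr),\qquad E_0\le C_2\bigl(\tau^{1/2}+A_0\bigr).
\end{align*}
Since $K$ is fixed, unrolling this discrete Grönwall-type recursion yields $\max_k E_k\le C\bigl(\tau^{1/2}+\sum_{j=0}^{K-1}A_j\bigr)$, with $C$ depending only on $K$, $C_1$, $C_2$ and $C(L)$. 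Multiplying by the $C(L)$ from the reduction step gives \eqref{main theorem eq: error bound}.

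\textbf{Main obstacle.} The delicate point is to justify that \eqref{eq: a priori BSDE Y} and \eqref{collorary eq: deep bsde bound square root Y} may be invoked with deterministic initial condition $X_{t_k}^k=x$ and with constants $C_1,C_2$ uniform in $x\in\R^d$ and $o\in\mathbb{O}$. These estimates depend only on the Lipschitz constants of the coefficients and of $f$, which are independent of $(x,o)$ under \eqref{eq: condition smooth}--\eqref{eq: condition ellipticity}; nevertheless, some care is needed to transfer the quoted $L^2$-in-expectation bounds into the $L^\infty$-in-$(x,o)$ form required by $E_k$, which is precisely what the pointwise multiplicative factorisation of $g_k-\overline g_k$ makes possible and which drives the whole argument.
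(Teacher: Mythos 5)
Your proposal is correct, and it takes a genuinely shorter route through the core single-step estimate than the paper does. The paper reduces the update step via the uniform bound on $L$ exactly as you do, and it ends with the same recursion $e_k\le C_3 e_{k-1}+C_3(\tau^{1/2}+A_k)$; but in between it compares the Feynman--Kac representations of $p_k(t_{k+1},x)$ and $w_k^*(x)$ from Corollary~\ref{corollary: feynman--kac formula}, i.e.\ terminal value plus time-integral of the driver. This produces two terms, $\mathrm{I}$ (terminal discrepancy, handled as in your Step~1) and $\mathrm{II}$ (the driver difference), and the latter requires extra work: Fubini--Tonelli, the Lipschitz continuity of $f$, the order-$\tfrac12$ Euler--Maruyama strong error for $X$, and crucially the $Z$-estimates \eqref{eq: a priori BSDE Z} and \eqref{collorary eq: deep bsde bound square root Z} in addition to the $Y$-estimates. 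You bypass all of $\mathrm{II}$ by observing that $Y^{k,x}_{t_k}=p_k(t_{k+1},x,o_{1:k})$ and $\mathcal{Y}^{k,x}_{0}=w_k^*(x,o_{1:k})$ are both deterministic, so the pointwise error is exactly the $L^2$ distance of the two backward processes at the left endpoint $t_k$, which is covered by the time-suprema in \eqref{eq: a priori BSDE Y} and \eqref{collorary eq: deep bsde bound square root Y} (note $\mathcal{Y}^k_{t_k}=\mathcal{Y}^k_0$ by the piecewise-constant interpolation \eqref{eq:calXYZ}). One triangle inequality through $\widetilde{Y}^{k,x}$ then yields the same recursion with the same constants' dependencies, minus the $Z$-machinery. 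The point you flag as the ``main obstacle''---uniformity of $C_1,C_2$ in $(x,o)$ for the conditioned processes---is treated the same way in the paper, which also applies the quoted $L^2$ bounds to the $x$-conditioned processes and takes $\sup_{x}$ and $\sup_{o}$ afterwards without further comment; your argument is no weaker there. In short: same skeleton, a more economical decomposition of the per-step error, at no loss of generality or sharpness.
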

\begin{remark}
    An analog of Theorem~\ref{main theorem: error bound} can be obtained for a more general drift coefficient $b \in C_{\mathrm{b}}^\infty(\R^d;\R^d)$ under the stronger ''ellipticity'' condition that $a=\sigma\sigma^\top$ is invertible. This can be proven by following an analogue to the proof presented below, but requires ellipticity to use a corresponding \emph{a priori} estimates from \cite[Proposition~2.1]{BSDE_in_finance} to the $Z$ process.
\end{remark}

\begin{proof}[Proof of Theorem~\ref{main theorem: error bound}]
We begin by fixing $k\in\{0,\dots,K-1\}$ and take $o_{1:k+1}\in \mathbb{O}_{k+1}$. 
Throughout the proof, $C>0$ denotes a generic constant whose value may change from line to line. 
The constant $C$ may depend on $f$, $T$, $K$, $C_1$, $C_2$, and the uniform bound on the likelihood $L$, but is independent of $\tau$, $x$, and the observations $o_{1:K}\in\mathbb{O}_K$. We recall that the exact filter solution is obtained by multiplying the prediction density with the likelihood
\begin{align*}
    p_{k+1}
    (t_{k+1},x,o_{1:k+1})
    &=
    p_k
    (t_{k+1},x,o_{1:k})
    L(o_{k+1},x).
\end{align*}
Analogously, we recall the approximation, $\widehat{p}$, from \eqref{eq: deep bsde filter}, defined by
\begin{align*}
    \widehat{p}_{k+1}(x,o_{1:k+1})
    &=
    w_k(x,o_{1:k})
    L(o_{k+1},x).
\end{align*}
Recalling the uniform bounding constant on $L$, we get
\begin{align}
\begin{split} 
\label{proof eq: initial LHS}
    &\hspace{-4em}
    \sup_{x\in\R^d}
    \big|
    p_{k+1}
    (t_{k+1},x,o_{1:k+1})
    -
    \widehat{p}_{k+1}(x,o_{1:k+1})
    \big|
    \\
    &=
    \sup_{x\in\R^d}
    \big|
    p_k
    (t_{k+1},x,o_{1:k})
    L(o_{k+1},x)
    -
    w_k(x,o_{1:k})
    L(o_{k+1},x)
    \big|
    \\
    &\leq
    C
    \sup_{x\in\R^d}
    \big|
    p_k
    (t_{k+1},x,o_{1:k})
    -
    w_k(x,o_{1:k})
    \big|.
\end{split}
\end{align}
To analyze this difference, we recall the Feynman--Kac formulation of $p$ from Corollary~\ref{corollary: feynman--kac formula} and the simplified $f$ in \eqref{eq: simplified f} which follows by \eqref{eq: mu-bar choice}. We replace $t_{k+1}$ by $t_{k,N}$ and $t_{k}$ by $t_{k,0}$ to obtain
\begin{align*}
    p_k
    (t_{k+1},x)
    &=
    \E
    \Bigg[
    g_k(
    X_{t_{k,N}}^{k,x}
    )
    +
    \int_{t_{k,0}}^{t_{k,N}}
    f(
    X_s^{k,x},
    Y_s^{k,x}
    )
    \dd s
    \Bigg].
\end{align*}
For convenience, we have dropped the $o_{1:k}$ and $o_{k+1}$ from the notation and will continue to do so whenever possible. Likewise, we see that the approximation $w_k$, defined by \eqref{eq: discrete BSDE minimization}, satisfies 
\begin{align*}
    w_k(x)
    &=
    \E
    \big[
    w_k(\mathcal{X}_{0}^k)
    \mid
    \mathcal{X}_{0}^k
    =
    x
    \big]
    \\
    &=
    \E
    \Bigg[
    \mathcal{Y}_N^{k,x}
    +
    \sum_{\ell=0}^{N-1}
    \Big(
    f(
    \mathcal{X}_\ell^{k,x},
    \mathcal{Y}_\ell^{k,x}
    )
    (t_{k,\ell+1}-t_{k,\ell})
    +
    \big(\mathcal{Z}_\ell^{k,x}\big)^\top
    \sigma(\mathcal{X}_\ell^{k,x})
    (W_{t_{k,\ell+1}}
    -
    W_{t_{k,\ell}})
    \Big)
    \Bigg]
    \\
    &=
    \E
    \Bigg[
    \mathcal{Y}_N^{k,x}
    +
    \sum_{\ell=0}^{N-1}
    f(
    \mathcal{X}_\ell^{k,x},
    \mathcal{Y}_\ell^{k,x}
    )
    (t_{k,\ell+1}-t_{k,\ell})
    \Bigg].
\end{align*}
Looking at the difference, rearranging the terms and applying the triangle inequality, we see that 
\begin{align*}
    &\hspace{-1em}
    \big|
    p_k
    (t_{k+1}, x) 
    - 
    w_k
    (x)
    \big|
    \\
    &\hspace{1em}=
    \bigg|
    \E
    \Bigg[
    g_k(
    X_{t_{k,N}}^{k,x}
    )
    +
    \int_{t_{k,0}}^{t_{k,N}}
    f(
    X_s^{k,x},
    Y_s^{k,x}
    )
    \dd s
    \Bigg]
    -
    \E
    \Bigg[
    \mathcal{Y}_N^{k,x}
    +
    \sum_{\ell=0}^{N-1}
    f(
    \mathcal{X}_\ell^{k,x},
    \mathcal{Y}_\ell^{k,x}
    )
    (t_{k,\ell+1}-t_{k,\ell})
    \Bigg]
    \bigg|
    \\
    &\hspace{1em}\leq
    \Big|
    \E
    \Big[
    g_k(
    X_{t_{k,N}}^{k,x}
    )
    -
    \mathcal{Y}_N^{k,x}
    \Big]
    \Big|
    +
    \bigg|
    \sum_{\ell=0}^{N-1}
    \E
    \bigg[
    \int_{t_{k,\ell}}^{t_{k,\ell+1}}
    f(
    X_s^{k,x},
    Y_s^{k,x}
    )
    -
    f(
    \mathcal{X}_\ell^{k,x},
    \mathcal{Y}_\ell^{k,x}
    )
    \dd s
    \bigg]
    \bigg|
    \\
    &\hspace{1em}=
    \text{I}
    +
    \text{II}.
\end{align*}
We begin by looking at the first term I, in which we add and subtract the auxiliary solution $(\widetilde{Y}^{k})$ that satisfies $
\widetilde{Y}_{t_{k,N}}^{k,x}
= 
\overline{g}_k(
X_{t_{k,N}}^{k,x}
)
$. 
Together with the triangle inequality, we obtain
\begin{align*}
    \text{I}
    &=
    \Big|
    \E
    \big[
    g_k(
    X_{t_{k,N}}^{k,x}
    )
    -
    \widetilde{Y}_{t_{k,N}}^{k,x}
    \big]
    +
    \E
    \big[
    \widetilde{Y}_{t_{k,N}}^{k,x}
    -
    \mathcal{Y}_N^{k,x}
    \big]
    \Big|
    \\
    &\leq
    \E
    \Big[
    \big|
    g_k(
    X_{t_{k,N}}^{k,x}
    )
    -
    \overline{g}_k(
    X_{t_{k,N}}^{k,x}
    )
    \big|
    \Big]
    +
    \E
    \Big[
    \big|
    \widetilde{Y}_{t_{k,N}}^{k,x}
    -
    \mathcal{Y}_N^{k,x}
    \big|
    \Big]
    \\
    &=
    \text{I}_{1}
    +
    \text{I}_{2}.
\end{align*}
For the first term, we recall $g_k$ from \eqref{theorem eq: g_k} and $\overline{g}_k$ from \eqref{eq: overline-g_k}. By applying Hölder's inequality and \eqref{eq: uniform bound L} we get
\begin{align*}
    \text{I}_1
    &=
    \E
    \Big[
    \big|
    p_{k-1}(
    t_k,
    X_{t_{k,N}}^{k,x}
    )
    L(o_k,X_{t_{k,N}}^{k,x})
    -
    w_{k-1}(
    X_{t_{k,N}}^{k,x}
    )
    L(o_k,X_{t_{k,N}}^{k,x})
    \big|
    \Big]
    \\
    &\leq
    C\,
    \E
    \Big[
    \big|
    p_{k-1}(
    t_k,
    X_{t_{k,N}}^{k,x}
    )
    -
    w_{k-1}(
    X_{t_{k,N}}^{k,x}
    )
    \big|
    \Big]
    \\
    &\leq
    C
    \sup_{x\in\R^d}
    \big|
    p_{k-1}(
    t_k,
    x
    )
    -
    w_{k-1}(
    x
    )
    \big|.
\end{align*}
This is the error from the previous time step multiplied with a constant.
For the term $\text{I}_2$ we apply the Cauchy--Schwarz inequality and \eqref{collorary eq: deep bsde bound square root Y} to obtain
\begin{align*}
    \text{I}_2
    &\leq
    \E
    \Big[
    \big|
    \widetilde{Y}_{t_{k,N}}^{k,x}
    -
    \mathcal{Y}_N^{k,x}
    \big|^2
    \Big]^{\frac{1}{2}}
    \leq
    C_2 
    \Bigg( 
    \tau^\frac{1}{2}
    + 
    \E
    \Big[
    \big| 
    \overline{g}_k(
    \mathcal{X}^{k,x}_{N}
    ) 
    - 
    \mathcal{Y}^{k,x}_N 
    \big|^2 
    \Big]^\frac{1}{2}
    \Bigg).
\end{align*}
By collecting $\text{I}_1$ and $\text{I}_2$, we get
\begin{align*}
    \text{I}
    &\leq
    C
    \Big(
    \sup_{x\in\R^d}
    \big|
    p_{k-1}(
    t_k,
    x
    )
    -
    w_{k-1}(
    x
    )
    \big|
    +
    \tau^\frac{1}{2}
    + 
    \E
    \Big[
    \big| 
    \overline{g}_k(
    \mathcal{X}^{k,x}_{N}
    ) 
    - 
    \mathcal{Y}^{k,x}_N 
    \big|^2 
    \Big]^\frac{1}{2}
    \Big)
    .
\end{align*}
Next we rewrite the term $\text{II}$ by applying the Fubini--Tonelli theorem and using the triangle inequality to obtain
\begin{align*}
    \text{II}
    &=
    \bigg|
    \sum_{\ell=0}^{N-1}
    \E
    \bigg[
    \int_{t_{k,\ell}}^{t_{k,\ell+1}}
    f(
    X_s^{k,x},
    Y_s^{k,x}
    )
    -
    f(
    \mathcal{X}_\ell^{k,x},
    \mathcal{Y}_\ell^{k,x}
    )
    \dd s
    \bigg]
    \bigg|
    \notag
    \\
    &\leq
    \sum_{\ell=0}^{N-1}
    \int_{t_{k,\ell}}^{t_{k,\ell+1}}
    \E
    \Big[
    \big|
    f(
    X_s^{k,x},
    Y_s^{k,x}
    )
    -
    f(
    \mathcal{X}_\ell^{k,x},
    \mathcal{Y}_\ell^{k,x}
    )
    \big|
    \Big]
    \dd s
    .
\end{align*}
The absolute integrability of the integrand, required for the use of the Fubini--Tonelli theorem, can be shown by the properties of $f$ and by standard moment estimates. 
We continue by using the Lipschitz continuity of $f$ and applying the Cauchy--Schwarz inequality
\begin{align}
    \text{II}
    &\leq
    C
    \sum_{\ell=0}^{N-1}
    \int_{t_{k,\ell}}^{t_{k,\ell+1}}
    \bigg(
    \E
    \Big[
    \big\|
    X_s^{k,x}
    -
    \mathcal{X}_\ell^{k,x}
    \big\|
    +
    \big|
    Y_s^{k,x}
    -
    \mathcal{Y}_\ell^{k,x}
    \big|
    \Big]
    \bigg)
    \dd s
    \notag
    \\
    &\leq
    C
    \sum_{\ell=0}^{N-1}
    \int_{t_{k,\ell}}^{t_{k,\ell+1}}
    \bigg(
    \E
    \Big[
    \big\|
    X_s^{k,x}
    -
    \mathcal{X}_\ell^{k,x}
    \big\|^2
    \Big]^{\frac{1}{2}}
    +
    \E
    \Big[
    \big|
    Y_s^{k,x}
    -
    \mathcal{Y}_\ell^{k,x}
    \big|^2
    \Big]^\frac{1}{2}
    \bigg)
    \dd s 
    .
    \label{proof eq: II RHS 2}
\end{align}
The first term in the integral, is the strong error of the Euler--Maruyama approximation of a time homogeneous SDE, and satisfies order $\frac{1}{2}$ convergence in $\tau$. For the second term, we add and subtract the auxiliary process $\widetilde{Y}^k$, defined as the solution to \eqref{eq: auxiliary BSDE formulation}, inside the norm.  Applying the triangle inequality and taking the supremum over $[t_k,t_{k+1}]$ yields
\begin{align*}
    \E
    \Big[
    \big|
    Y_s^{k,x}
    -
    \mathcal{Y}_\ell^{k,x}
    \big|^2
    \Big]^\frac{1}{2}
    &\leq
    \E
    \Big[
    \big|
    Y_s^{k,x}
    -
    \widetilde{Y}_s^{k,x}
    \big|^2
    \Big]^\frac{1}{2}
    +
    \E
    \Big[
    \big|
    \widetilde{Y}_s^{k,x}
    -
    \mathcal{Y}_\ell^{k,x}
    \big|^2
    \Big]^\frac{1}{2}
    \\
    &
    \leq
    \sup_{t\in[t_k,t_{k+1}]}
    \E
    \Big[
    \big|
    Y_t^{k,x}
    -
    \widetilde{Y}_t^{k,x}
    \big|^2
    \Big]^\frac{1}{2}
    +
    \sup_{t\in[t_k,t_{k+1}]}
    \E
    \Big[
    \big|
    \widetilde{Y}_t^{k,x}
    -
    \mathcal{Y}_t^{k,x}
    \big|^2
    \Big]^\frac{1}{2}
    .
\end{align*}
By applying \eqref{eq: a priori BSDE Y} and \eqref{collorary eq: deep bsde bound square root Y} for the respective term, we get
\begin{align*}
    &\E
    \Big[
    \big|
    Y_s^{k,x}
    -
    \mathcal{Y}_\ell^{k,x}
    \big|^2
    \Big]^\frac{1}{2}
    \leq
    C
    \bigg(
    \E
    \Big[
    \big|
    g_k(
    X_{t_{k,N}}^{k,x}
    )
    -
    \overline{g}_k(
    X_{t_{k,N}}^{k,x}
    )
    \big|^2
    \Big]^{\frac{1}{2}}
    +
    \tau^\frac{1}{2}
    +
    \E 
    \Big[
    \big| 
    \overline{g}_{k}(\mathcal{X}_N^{k,x}) 
    - 
    \mathcal{Y}_N^{k,x}
    \big|^2
    \Big]^{\frac{1}{2}}
    \bigg)
    .
\end{align*}
Inserting this bound into \eqref{proof eq: II RHS 2}, we obtain
\begin{align*}
    &\hspace{-2em}\text{II}
    \leq
    C
    \sum_{\ell=0}^{N-1}
    \int_{t_{k,\ell}}^{t_{k,\ell+1}}
    \bigg(
    \tau^{\frac{1}{2}}
    + 
    \E
    \Big[
    \big|
    g_k(
    X_{t_{k,N}}^{k,x}
    )
    -
    \overline{g}_k(
    X_{t_{k,N}}^{k,x}
    )
    \big|^2
    \Big]^{\frac{1}{2}}
    +
    \E 
    \Big[
    \big| 
    \overline{g}_{k}(\mathcal{X}_N^{k,x}) 
    - 
    \mathcal{Y}_N^{k,x}
    \big|^2
    \Big]^{\frac{1}{2}}
    \bigg)
    \dd s
    \\
    &\hspace{-1em}
    \leq
    C
    \frac{T}{K}
    \bigg(
    \tau^{\frac{1}{2}}
    + 
    \E
    \Big[
    \big|
    g_k(
    X_{t_{k,N}}^{k,x}
    )
    -
    \overline{g}_k(
    X_{t_{k,N}}^{k,x}
    )
    \big|^2
    \Big]^{\frac{1}{2}}
    +
    \E 
    \Big[
    \big| 
    \overline{g}_{k}(\mathcal{X}_N^{k,x}) 
    - 
    \mathcal{Y}_N^{k,x}
    \big|^2
    \Big]^{\frac{1}{2}}
    \bigg)
    \dd s
    .
\end{align*}
Adapting the same arguments as for $\text{I}_1$ on the second term, we have
\begin{align*}
    \text{II}
    &\leq
    C
    \bigg(
    \tau^{\frac{1}{2}}
    + 
    \sup_{x\in\R^d}
    \big|
    p_{k-1}(
    x
    )
    -
    w_{k-1}(
    x
    )
    \big|
    +
    \E 
    \Big[
    \big| 
        \overline{g}_{k}(\mathcal{X}_N^{k,x}) 
        - 
        \mathcal{Y}_N^{k,x}
    \big|^2 
    \Big]^{\frac{1}{2}}
    \bigg),
\end{align*}
which up to a constant is the same to the bound for $\text{I}$.
By defining the error term
$
e_k 
\defeq 
\sup_{x\in\R^d}
\big|
p_k
(t_{k+1}, x) 
- 
w_k
(x)
\big|
$ 
and combining the bounds of $\text{I}$ and $\text{II}$ we get for some constant $C>0$ depending on $f$, $T$, $K$, $C_1$, $C_2$, and $L$
\begin{align*}
    &\big|
    p_k
    (t_{k+1}, x) 
    - 
    w_k
    (x)
    \big|
    \leq
    C
    e_{k-1}
    +
    C
    \bigg(
    \tau^{\frac{1}{2}}
    +
    \E 
    \Big[
    \big| 
        \overline{g}_{k}(\mathcal{X}_N^{k,x}) 
        - 
        \mathcal{Y}_N^{k,x}
    \big|^2 
    \Big]^{\frac{1}{2}}
    \bigg)
    .
\end{align*}
Taking supremum over $x\in\R^d$ on the right-hand side, followed by the left-hand side, we obtain the recursive bound
\begin{align*}
    e_k
    &\leq
    C
    e_{k-1}
    +
    C
    \bigg(
    \tau^{\frac{1}{2}}
    +
    \sup_{x\in\R^d}
    \E 
    \Big[
    \big| 
        \overline{g}_{k}(\mathcal{X}_N^{k,x}) 
        - 
        \mathcal{Y}_N^{k,x}
    \big|^2 
    \Big]^{\frac{1}{2}}
    \bigg).
\end{align*}
Applying this bound recursively, we get
\begin{align*}
    e_k 
    &\leq 
    \sum_{j=0}^k 
    C^{k-j+1}
    \bigg(
    \tau^{\frac{1}{2}}
    +
    \sup_{x\in\R^d}
    \E 
    \Big[
    \big| 
        \overline{g}_{j}(\mathcal{X}_N^{j,x}) 
        - 
        \mathcal{Y}_N^{j,x}
    \big|^2 
    \Big]^{\frac{1}{2}}
    \bigg)
    \\
    &\leq
    C
    \Bigg(
    \tau^{\frac{1}{2}}
    +
    \sum_{j=0}^{K-1} 
    \sup_{x \in \R^d}
    \E 
    \Big[
    \big| 
    \overline{g}_{j}(\mathcal{X}_N^{j,x}) 
    - 
    \mathcal{Y}_N^{j,x}
    \big|^2 
    \Big]^{\frac{1}{2}}
    \Bigg)
    ,
\end{align*}
for some constant $C$ depending on $f,~T,~K,~C_1,~C_2$ and $L$. We insert this bound into \eqref{proof eq: initial LHS} and finish by taking the supremum over $o_{1:K}\in\mathbb{O}_K$ on the right-hand side, moving it inside the sum and finally taking the supremum on the left-hand side to obtain
\begin{align*}
    &\big\|
    p_{k+1}(t_{k+1})
    -
    \widehat{p}_{k+1}
    \big\|_{L^\infty(\mathbb{O}_k;L^\infty(\R^d;\R))}
    \leq
    C
    \Bigg(
    \tau^{\frac{1}{2}}
    +
    \sum_{k=0}^{K-1} 
    \sup_{o_k\in\mathbb{O}_k}
    \sup_{x \in \R^d}
    \E 
    \Big[
    \big| 
    \overline{g}_{k}(
    \mathcal{X}_N^{k,x},
    o_{1:k}
    ) 
    - 
    \mathcal{Y}_N^{k,x}
    \big|^2 
    \Big]^{\frac{1}{2}}
    \Bigg)
    .
\end{align*}
Since the bound holds for all $k \in \{0, \dots, K-1\}$, the proof is complete.
\end{proof}

\section{Numerical results}
\label{section: numerical results}
In this section, we investigate the convergence result of Theorem~\ref{main theorem: error bound} numerically. To this end we begin in Section~\ref{section: numerical approximation} by outlining the final approximation steps for an employable algorithm. Section~\ref{section: numerical experiments} contains the two numerical examples in which we evaluate the error terms of the bound \eqref{main theorem eq: error bound} numerically.
\subsection{Numerical approximation}
\label{section: numerical approximation}
In \eqref{eq: discrete BSDE minimization}, we defined the optimization problem over all continuous functions, which is not feasible in practice.
Instead, we consider classes of neural networks denoted $\mathbf{NN}^{\Theta,p,k} \subset L^\infty(\mathbb{O}_k;C(\R^d; \R^p))$ for $(p,k)\in\{1,d\}\times\{0,1,\dots,K-1\}$. The specifications of these classes, and the set $\Theta$ of learnable parameters, are presented in Appendix~\ref{Appendix: implementation details}. By parameterizing the continuous functions $(w,\overline{v})$ using neural networks $(w^\theta,\overline{v}^\theta)\in \mathbf{NN}^{\Theta,1,k}\times \mathbf{NN}^{\Theta,d,k}$ and optimizing over their weights $\theta$, the scheme becomes fully implementable. This approach leverages the expressive power of neural networks, enabling a flexible and efficient way of approximating the filtering density. The new recursive scheme is given, for $k=0,\dots,K-1$, by
\begin{align} \label{eq: NN minimization}
    &\hspace{0em}
    \underset{
    \substack{
    w^\theta\in 
    \mathbf{NN}^{\Theta,1,k}
    \\
    (\overline{v}_n^\theta)_{n=0}^{N-1}\in 
    \prod_{n=0}^{N-1}
    \mathbf{NN}^{\Theta,d,k}
    }}
    {\mathrm{min}}\  
    \E
    \Bigg[
    \Big|
    \mathcal{Y}_{N}^{k}
    -
    \overline{g}_k
    (\mathcal{X}_{N}^k,
    O_{1:k})
    \Big|^2
    \Bigg]
    \\
\begin{split} \notag
    \mathcal{X}_{n+1}^k
    &=
    \mathcal{X}_{n}^k
    +
    b
    (\mathcal{X}_{n}^k)
    (t_{k,n+1}-t_{k,n})
    +
    \sigma
    (\mathcal{X}_{n}^k)
    (W_{t_{k,n+1}}-W_{t_{k,n}})
    ,\quad
    n=0,\dots,N-1,
    \\[1.0\jot]
    \mathcal{Z}_{n}^{k}
    &=
    \overline{v}_{n}^\theta
    (\mathcal{X}_{n}^k, O_{1:k})
    ,\hspace{18.5em}
    n=0,\dots,N-1,
    \\[-1.0\jot]
    \mathcal{Y}_{n+1}^{k}
    &=
    w^\theta
    (\mathcal{X}_{0}^k, O_{1:k})
    -
    \sum_{\ell=0}^n
    \Big(
    f_b
    \big(
    \mathcal{X}_{\ell}^k,
    \mathcal{Y}_{\ell}^{k},
    \mathcal{Z}_{\ell}^{k}
    \big)
    (t_{k,\ell+1}-t_{k,\ell})
    \\
    &\hspace{9.5em}-
    \big(\mathcal{Z}_{\ell}^{k}\big)^\top
    \sigma
    (\mathcal{X}_{\ell}^k)
    (W_{t_{k,\ell+1}}-W_{t_{k,\ell}})
    \Big)
    ,\quad
    n=0,\dots,N-1.
\end{split}
\end{align}
Likewise, the expected value in the objective function cannot be evaluated in closed form and must instead be approximated via Monte Carlo sampling. To this end, we generate a set of $M_{\text{train}}$ independent samples $(S^{(m)}, O^{(m)}, X^{(m)}, W^{(m)})_{m=1}^{M_{\text{train}}}$ from the system \eqref{introduction eq: state}, \eqref{introduction eq: obs}, and the auxiliary process \eqref{eq: auxiliary process}. These samples define $M_{\text{train}}$ trajectories over which we evaluate the loss functional appearing in \eqref{eq: NN minimization}. The resulting empirical average used during training is defined as
\begin{align} \label{eq: Monte Carlo minimization}
    &\hspace{-4em}
    \underset{
    \substack{
    w^\theta\in 
    \mathbf{NN}^{\Theta,1,k}
    \\
    (\overline{v}_n^\theta)_{n=0}^{N-1}\in 
    \prod_{n=0}^{N-1}
    \mathbf{NN}^{\Theta,d,k}
    }}
    {\mathrm{min}}\  
    \frac{1}{M_{\text{train}}}
    \sum_{m=1}^{M_{\text{train}}}
    \Big|
    \mathcal{Y}_{N}^{k,(m)}
    -
    \overline{g}_k
    \big(
    \mathcal{X}_{N}^{k,(m)},
    O_{1:k}^{(m)}
    \big)
    \Big|^2
    .
\end{align}
A schematic overview of the method is presented in Figure~\ref{fig: schematic}.

\begin{figure}[h]
\scalebox{0.575}{
\begin{tikzpicture}[
    font=\sffamily\small,
    timeline/.style={font=\small\bfseries},
    obsbox/.style={draw, thick, fill=orange!30, minimum width=2.2cm, minimum height=0.8cm, rounded corners, drop shadow},
    state/.style={draw, thick, fill=blue!15, minimum width=1.4cm, minimum height=0.8cm, rounded corners, drop shadow},
    brownian/.style={draw, thick, fill=orange!30, minimum width=1.4cm, minimum height=0.8cm, rounded corners, drop shadow},
    nnbox/.style={draw, thick, fill=red!15, minimum width=1.4cm, minimum height=0.8cm, rounded corners, drop shadow},
    wbox/.style={draw, thick, fill=red!15, minimum width=1.4cm, minimum height=0.8cm, rounded corners, drop shadow},
    gradbox/.style={draw, thick, fill=purple!15, minimum width=1.4cm, minimum height=0.8cm, rounded corners, drop shadow},
    output/.style={draw, thick, fill=green!20, minimum width=1.4cm, minimum height=0.8cm, rounded corners, drop shadow},
    func/.style={draw, thick, fill=yellow!20, minimum width=1.4cm, minimum height=0.8cm, rounded corners, drop shadow},
    lossbox/.style={draw, thick, fill=cyan!20, minimum width=1.4cm, minimum height=0.8cm, rounded corners, drop shadow},
    labelstyle/.style={font=\footnotesize\itshape, midway, above},
    arrowdet/.style={-Stealth, thick, blue!60},
    arrowlearn/.style={-Stealth, thick, red!60, dashed},
    arrowdashdet/.style={-Stealth, thick, black, dashed},
    gridnode/.style={minimum width=1.5cm, minimum height=1cm, align=center}
]

\def\dy{2.4}
\def\Ytime{0}
\def\Yrand{\Ytime - 0.5*\dy}
\def\Yx{\Ytime - 1.25*\dy}
\def\Ynn{\Ytime - 2.25*\dy}
\def\Yz{\Ytime - 3.25*\dy}
\def\Yy{\Ytime - 4.25*\dy}
\def\Yfunc{\Ytime - 4.75*\dy}

\def\s{2.8}

\node[anchor=west] at ($(-5.0, \Ytime)$) {\textbf{Time}};
\node[anchor=west] at ($(-5.0, \Yrand)$) {\textbf{Data}};
\node[anchor=west] at ($(-5.0, \Yx)$) {{$\mathcal{X}$}};
\node[anchor=west] at ($(-5.0, \Ynn)$) {\textbf{NNs}};
\node[anchor=west] at ($(-5.0, \Yz)$) {{$\mathcal{Z}$}};
\node[anchor=west] at ($(-5.0, \Yy)$) {$\mathcal{Y}$};

\node[timeline, gridnode] (T0) at (0*\s, \Ytime) {\(t_0\)};
\node[timeline, gridnode] (T1) at (1*\s, \Ytime) {\(t_1\)};
\node[timeline, gridnode] (T2) at (2*\s, \Ytime) {\(t_2\)};
\node[timeline, gridnode] (Tdots) at (3*\s, \Ytime) {\(\cdots\)};
\node[timeline, gridnode] (Tn1) at (4*\s, \Ytime) {\(t_{N-1}\)};
\node[timeline, gridnode] (Tn) at (5*\s, \Ytime) {\(t_N\)};

\node[obsbox, gridnode] (Obs) at ($ (0*\s, \Yrand) + (-2.5, 0) $) {\(O_{1:k}\)};
\node[brownian, gridnode] (X0r) at (0*\s, \Yrand) {\(\mathcal{X}_0\)};
\node[brownian, gridnode] (W1) at (1*\s, \Yrand) {\(\Delta W_{t_1}\)};
\node[brownian, gridnode] (W2) at (2*\s, \Yrand) {\(\Delta W_{t_2}\)};
\node[gridnode] (Wdots) at (3*\s, \Yrand) {\(\cdots\)};
\node[brownian, gridnode] (Wn1) at (4*\s, \Yrand) {\(\Delta W_{t_{N-1}}\)};
\node[brownian, gridnode] (Wn) at (5*\s, \Yrand) {\(\Delta W_{t_N}\)};

\node[state, gridnode] (X0) at (0*\s, \Yx) {\(\mathcal{X}_0\)};
\node[state, gridnode] (X1) at (1*\s, \Yx) {\(\mathcal{X}_1\)};
\node[state, gridnode] (X2) at (2*\s, \Yx) {\(\mathcal{X}_2\)};
\node[gridnode]         (Xdots) at (3*\s, \Yx) {\(\cdots\)};
\node[state, gridnode] (Xn1) at (4*\s, \Yx) {\(\mathcal{X}_{N-1}\)};
\node[state, gridnode] (Xn) at (5*\s, \Yx) {\(\mathcal{X}_N\)};

\node[wbox, gridnode] (Wbox) at ($ (0*\s, \Ynn) + (-2.5, 0) $) {\(w^\theta\)};
\node[nnbox, gridnode] (NN0) at (0*\s, \Ynn) {\(v_0^\theta\)};
\node[nnbox, gridnode] (NN1) at (1*\s, \Ynn) {\(v_1^\theta\)};
\node[nnbox, gridnode] (NN2) at (2*\s, \Ynn) {\(v_2^\theta\)};
\node[gridnode]         (NNdots) at (3*\s, \Ynn) {\(\cdots\)};
\node[nnbox, gridnode] (NNn1) at (4*\s, \Ynn) {\(v_{n-1}^\theta\)};

\node[gradbox, gridnode] (Z0) at (0*\s, \Yz) {\(\mathcal{Z}_0\)};
\node[gradbox, gridnode] (Z1) at (1*\s, \Yz) {\(\mathcal{Z}_1\)};
\node[gradbox, gridnode] (Z2) at (2*\s, \Yz) {\(\mathcal{Z}_2\)};
\node[gridnode]          (Zdots) at (3*\s, \Yz) {\(\cdots\)};
\node[gradbox, gridnode] (Zn1) at (4*\s, \Yz) {\(\mathcal{Z}_{N-1}\)};

\node[output, gridnode] (Y0) at (0*\s, \Yy) {\(\mathcal{Y}_0\)};
\node[output, gridnode] (Y1) at (1*\s, \Yy) {\(\mathcal{Y}_1\)};
\node[output, gridnode] (Y2) at (2*\s, \Yy) {\(\mathcal{Y}_2\)};
\node[gridnode]          (Ydots) at (3*\s, \Yy) {\(\cdots\)};
\node[output, gridnode] (Yn1) at (4*\s, \Yy) {\(\mathcal{Y}_{N-1}\)};
\node[output, gridnode] (Yn) at (5*\s, \Yy) {\(\mathcal{Y}_N\)};

\coordinate (Xm0) at ($(X0)!0.5!(X1)$);
\coordinate (Xm1) at ($(X1)!0.5!(X2)$);
\coordinate (Xm3) at ($(Xdots)!0.5!(Xn1)$);
\coordinate (Xm4) at ($(Xn1)!0.5!(Xn)$);

\coordinate (Ym0) at ($(Y0)!0.5!(Y1)$);
\coordinate (Ym1) at ($(Y1)!0.5!(Y2)$);
\coordinate (Ym2) at ($(Y2)!0.5!(Ydots)$);
\coordinate (Ym3) at ($(Ydots)!0.5!(Yn1)$);
\coordinate (Ym4) at ($(Yn1)!0.5!(Yn)$);

\draw[arrowdashdet] (X0r.south) -- (X0.north);

\draw[arrowdashdet] (X0.south) -- (NN0.north);
\draw[arrowlearn] (NN0.south) -- (Z0.north);
\draw[arrowdet] (Z0.south) -- (Ym0);

\draw[arrowdashdet] (X1.south) -- (NN1.north);
\draw[arrowlearn] (NN1.south) -- (Z1.north);
\draw[arrowdet] (Z1.south) -- (Ym1);

\draw[arrowdashdet] (X2.south) -- (NN2.north);
\draw[arrowlearn] (NN2.south) -- (Z2.north);
\draw[arrowdet] (Z2.south) -- (Ym2);

\draw[arrowdashdet] (Xn1.south) -- (NNn1.north);
\draw[arrowlearn] (NNn1.south) -- (Zn1.north);
\draw[arrowdet] (Zn1.south) -- (Ym4);
\draw[arrowdet] (Zdots.south) -- (Ym3);

\draw[arrowdashdet] (X0.south) -- (Wbox.north);
\draw[arrowlearn] (Wbox.south) -- (Y0.north west);

\draw[arrowdet] (X0) -- (X1);
\draw[arrowdet] (X1) -- (X2);
\draw[arrowdet] (X2) -- (Xdots);
\draw[arrowdet] (Xdots) -- (Xn1);
\draw[arrowdet] (Xn1) -- (Xn);

\draw[arrowdet] (Y0) -- (Y1);
\draw[arrowdet] (Y1) -- (Y2);
\draw[arrowdet] (Y2) -- (Ydots);
\draw[arrowdet] (Ydots) -- (Yn1);
\draw[arrowdet] (Yn1) -- (Yn);

\draw[arrowdet] (W1.west) -| (Xm0);
\draw[arrowdet] (Xm0) -| (Ym0);

\draw[arrowdet] (W2.west) -| (Xm1);
\draw[arrowdet] (Xm1) -| (Ym1);

\draw[arrowdet] (Wn1.west) -| (Xm3);
\draw[arrowdet] (Xm3) -| (Ym3);

\draw[arrowdet] (Wn.west) -| (Xm4);
\draw[arrowdet] (Xm4) -| (Ym4);

\node[func, gridnode] (G) at (6*\s, \Yx) {\(\overline{g}_k(\mathcal{X}_N,O_{1:k})\)};
\node[draw, thick, fill=red!20, minimum width=1.4cm, minimum height=0.8cm, rounded corners, drop shadow, gridnode] (L2) at (6.1*\s, \Yy) {\( \mathbb{E} [ | \overline{g}_k (\mathcal{X}_N, ,O_{1:k}) - \mathcal{Y}_N |^2 ] \)};

\draw[arrowdashdet] (Xn) -- (G);
\draw[arrowdashdet] (Yn) -- (L2);
\draw[arrowdashdet] (G) -- (L2);

\coordinate (OmidW) at ($(Obs)!0.70!(Wbox)$);
\draw[arrowdashdet] (Obs.south) -- (Wbox.north);
\draw[arrowdashdet] (OmidW) -- (NN0.north west);
\draw[arrowdashdet] (OmidW) -- ++(1.2*\s,0) -- (NN1.north west);
\draw[arrowdashdet] (OmidW) -- ++(2.2*\s,0) -- (NN2.north west);
\draw[arrowdashdet] (OmidW) -- ++(3.625*\s,0) ;
\draw[arrowdashdet] (OmidW) + (4.2*\s,0) -- (NNn1.north west);
\draw[arrowdashdet] (4.2*\s,0) ++ (OmidW) -- ++ (2*\s,0) -- (G.south west);

\begin{scope}[shift={(0, \Yfunc)}]
  \draw[arrowdet] (0,0) -- ++(1.2,0) node[right=0.3cm] {Euler--Maruyama step};
  \draw[arrowlearn] (5,0) -- ++(1.2,0) node[right=0.3cm] {Learnable};
  \draw[arrowdashdet] (8.5,0) -- ++(1.2,0) node[right=0.3cm] {Input maps};
\end{scope}

\end{tikzpicture}}
\vspace{-10pt}
\caption{The figure illustrates the numerical scheme for a fixed $k=1,\dots,K-1$ in the optimization problem \eqref{eq: NN minimization}. The color of the arrows between the boxes indicate if the map consists of an Euler--Maruyama step, a parameterized neural network step or simply an input to a function.} 
\vspace{-10pt}
\label{fig: schematic}
\end{figure}
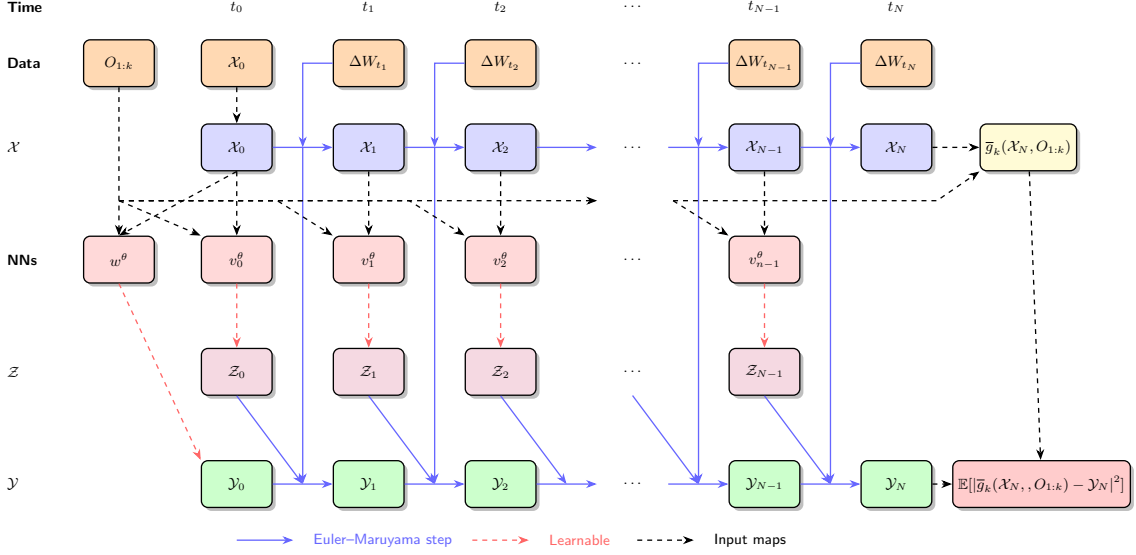

In the experiments in Section~\ref{section: numerical experiments} we use the normalized versions of $\overline{g}_k$, $k=1,\dots,K-1$, rather than the unnormalized counterparts. More precisely, we redefine the target function in \eqref{eq: Monte Carlo minimization}, for $k=1,\dots,K-1$, $o_{1:k}\in\mathbb{O}_k$, as 
\begin{align*}
    \overline{g}_k
    (x,
    o_{1:k})
    &=
    \frac{
    w_{k-1}^*
    (x,o_{1:k-1})
    L(o_k,x)
    }{
    C(o_{1:k})
    }
    ,\quad
    x\in\R^d,
    \\
    C(o_{1:k}) 
    &=
    \int_{\R^d}
    w_{k-1}^*
    (z,o_{1:k-1})
    L(o_k,z)
    \dd z.
\end{align*}
In the one-dimensional examples, the normalizing constant $C(O_{1:k}^{(m)})$ required for each training sample $m = 1, \dots, M_{\text{train}}$ is approximated using quadrature. Specifically, we define
\begin{align*}
    \widehat{C}^{(m)}
    &=
    \sum_{j=1}^{J}
    w_{k-1}^*
    (z_j,o_{1:k-1})
    L(o_k,z_j)
    \Delta z,
\end{align*}
where $(z_j)_{j=1}^J \subset \mathbb{R}$ is a fixed equidistant spatial grid with $\Delta z = z_{j} - z_{j-1}$ for $j = 2, \dots, J$. The grid is chosen to appropriately cover the support of the solution and to resolve the relevant features of the integrand. This quadrature-based approximation of the normalizing constant introduces an additional source of numerical error. To mitigate its effect, the number of grid points $J$ must be chosen sufficiently large to ensure accurate integration. Further implementation details are provided in Appendix~\ref{Appendix: implementation details}.

\subsection{Numerical experiments}
\label{section: numerical experiments}
In this section we empirically examine the numerical convergence of our approximate filter applied to \eqref{introduction eq: state}--\eqref{introduction eq: obs}. We test the method on two one-dimensional examples, one linear and one nonlinear. In both examples we set $T=1$, $K=10$, $d=d'=1$, $\sigma(x) = 1$, $L(o,x) = \N(o\mid h(x), R)$ with $h(x) = x$ and $R=1$. We note that this is a fairly low signal-to-noise ratio possibly making the filtering problem harder.

Recalling the error bound \eqref{main theorem eq: error bound} from Theorem~\ref{main theorem: error bound}, we now turn to estimating the quantities appearing in the bound. Specifically, we study convergence numerically by evaluating the error between $p_k$ and $\widehat{p}_k$ in the norm $L^\infty(\mathbb{O}_k; L^\infty(\R^d;\R))$, along with the corresponding \emph{a posteriori} term, across a range of discretizations. In the figures below, we denote these two quantities by
\begin{align*}
    e_k
    &=
    \|
    p_k(t_k)
    -
    \widehat{p}_k
    \|_{L^\infty(\mathbb{O}_k;L^\infty(\R^d;\R))}
    ,\quad
    k=1,\dots,K,
    \\
    E_k
    &=
    \sup_{o_{1:k}\in\mathbb{O}_k}
    \sup_{x\in\R^d}
    \E 
    \Big[
    \big| 
    \overline{g}_{k-1}(
    \mathcal{X}_N^{k-1,x},
    o_{1:k}
    ) 
    - 
    \mathcal{Y}_N^{k-1,x}
    \big|^2 
    \Big]^{\frac{1}{2}}
    ,\quad
    k=1,\dots,K.
\end{align*}
Here, $e_k$ measures the pointwise approximation error of the filtering density at time $t_k$, whereas $E_k$ reflects the residual error in the learning objective, serving as an \emph{a posteriori} error of how far the obtained solution is from the theoretical optimum at time $t_k$. See Appendix~\ref{Appendix: implementation details} for further details on the numerical approximation of these error terms.

\subsubsection*{Ornstein--Uhlenbeck process} The first equation we consider is a linear SDE with the drift coefficient $\mu(x) = -x$ and is solved by the Ornstein--Uhlenbeck process. In this linear setting, with a linear measurement function, the solution $p$ to \eqref{eq: global Fokker--Planck with update}--\eqref{eq: FP-update} is computed exactly by the Kalman filter. In this example, the coefficients satisfy conditions~\eqref{eq: condition smooth} and~\eqref{eq: condition hörmander}, and the convergence in Theorem~\ref{main theorem: error bound} applies. In Figure~\ref{fig: OU over time}, we report the pointwise errors $e_k$ and $E_k$, evaluated at each observation time $t_k = \frac{k}{K}$ for $k = 1, \dots, K$. The experiment is repeated for seven discretizations with $N = 2^j$, $j = 0, \dots, 6$, corresponding to the number of time steps between successive observation times. We note that both error terms decrease at all time steps as we increase the number of intermediate discretization steps. As expected, both error metrics decrease uniformly in time as $N$ increases. We also note that, for coarse discretizations, $e_k$ increases over time, indicating a cumulative error effect. However, for sufficiently fine resolutions ($N \geq 16$), the error remains nearly constant. The behavior of $E_k$ exhibits a similar trend, although not as clear.

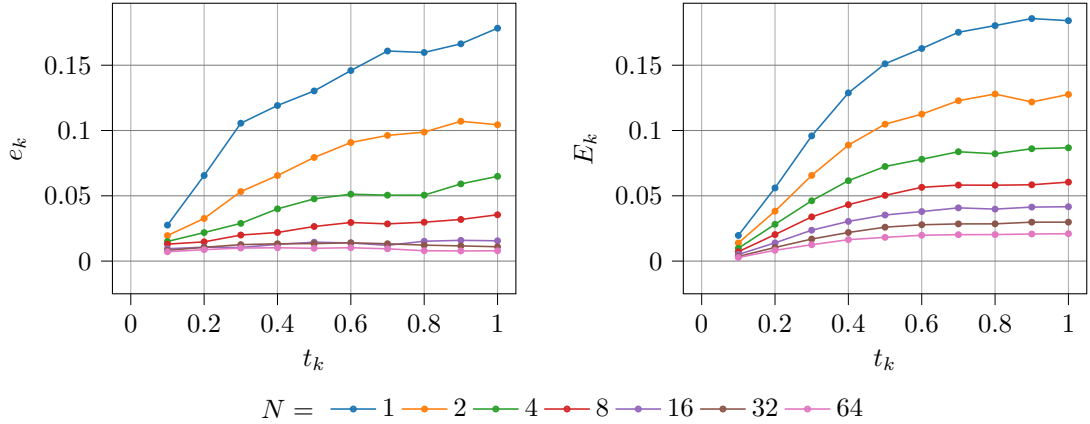
\begin{figure}[h]
\captionsetup{justification=raggedleft}
\begin{subfigure}[b]{0.49\linewidth}
\centering
\begin{tikzpicture}

\definecolor{crimson2143940}{RGB}{214,39,40}
\definecolor{darkgray176}{RGB}{176,176,176}
\definecolor{darkorange25512714}{RGB}{255,127,14}
\definecolor{forestgreen4416044}{RGB}{44,160,44}
\definecolor{lightgray204}{RGB}{204,204,204}
\definecolor{mediumpurple148103189}{RGB}{148,103,189}
\definecolor{orchid227119194}{RGB}{227,119,194}
\definecolor{sienna1408675}{RGB}{140,86,75}
\definecolor{steelblue31119180}{RGB}{31,119,180}

\begin{axis}[
width=0.45*6.028in,
height=0.45*4.754in,
legend cell align={left},
legend style={
  fill opacity=0.8,
  draw opacity=1,
  text opacity=1,
  at={(0.03,0.97)},
  anchor=north west,
  draw=none
},
legend columns=-1,
tick align=outside,
tick pos=left,
x grid style={darkgray176},
xlabel={$t_k$},
xmajorgrids,
xmin=-0.05, xmax=1.05,
xtick style={color=black},
y grid style={gray},
ylabel={$e_k$},
ymajorgrids,
ymin=-0.0250299139682389, ymax=0.197452039569865,
ytick style={color=black},
legend to name={fig: over time legend OU},
scaled y ticks=false,
y tick label style={/pgf/number format/fixed, /pgf/number format/precision=2}
]
\addplot [semithick, steelblue31119180, mark=*,  mark size=1, mark options={solid}]
table {%
0.100000001490116 0.0275535713881254
0.200000002980232 0.0655442774295806
0.300000011920929 0.105577737092972
0.400000005960464 0.119137413799763
0.5 0.130300521850586
0.600000023841858 0.145892560482025
0.699999988079071 0.160871788859367
0.800000011920929 0.159755676984787
0.899999976158142 0.166380330920219
1 0.178334429860115
};
\addlegendentry{1}
\addplot [semithick, darkorange25512714, mark=*,  mark size=1, mark options={solid}]
table {%
0.100000001490116 0.0195358395576477
0.200000002980232 0.0326341390609741
0.300000011920929 0.0532391034066677
0.400000005960464 0.0655180886387825
0.5 0.0793516337871551
0.600000023841858 0.0907924398779869
0.699999988079071 0.0962423309683799
0.800000011920929 0.0987747982144355
0.899999976158142 0.107053346931934
1 0.104414664208889
};
\addlegendentry{2}
\addplot [semithick, forestgreen4416044, mark=*,  mark size=1, mark options={solid}]
table {%
0.100000001490116 0.0150388544425368
0.200000002980232 0.0218407399952411
0.300000011920929 0.0288917906582355
0.400000005960464 0.0399996675550937
0.5 0.0476452782750129
0.600000023841858 0.0512391105294227
0.699999988079071 0.050485473126173
0.800000011920929 0.0505122952163219
0.899999976158142 0.0591105595231056
1 0.0649075284600257
};
\addlegendentry{4}
\addplot [semithick, crimson2143940, mark=*,  mark size=1, mark options={solid}]
table {%
0.100000001490116 0.0129822846502065
0.200000002980232 0.0147327426820993
0.300000011920929 0.0199437737464904
0.400000005960464 0.0219021216034889
0.5 0.0264957044273614
0.600000023841858 0.0295540448278188
0.699999988079071 0.0285055469721555
0.800000011920929 0.0297771897166967
0.899999976158142 0.03187707811594
1 0.0354757905006408
};
\addlegendentry{8}
\addplot [semithick, mediumpurple148103189, mark=*,  mark size=1, mark options={solid}]
table {%
0.100000001490116 0.0094834277406334
0.200000002980232 0.0106345107778906
0.300000011920929 0.0104010831564664
0.400000005960464 0.0128053948283195
0.5 0.0144955562427639
0.600000023841858 0.013893362134695
0.699999988079071 0.0120802419260144
0.800000011920929 0.0152005432173609
0.899999976158142 0.0158962979912757
1 0.0155473453924059
};
\addlegendentry{16}
\addplot [semithick, sienna1408675, mark=*,  mark size=1, mark options={solid}]
table {%
0.100000001490116 0.0082025481387972
0.200000002980232 0.0105555765330791
0.300000011920929 0.012611218728125
0.400000005960464 0.0132617903873324
0.5 0.0133871333673596
0.600000023841858 0.0140012009069323
0.699999988079071 0.0132855763658881
0.800000011920929 0.0122237876057624
0.899999976158142 0.01164176966995
1 0.0109742013737559
};
\addlegendentry{32}
\addplot [semithick, orchid227119194, mark=*,  mark size=1, mark options={solid}]
table {%
0.100000001490116 0.0071767936460673
0.200000002980232 0.0087373163551092
0.300000011920929 0.0099849104881286
0.400000005960464 0.0102084837853908
0.5 0.0098027102649211
0.600000023841858 0.0102366274222731
0.699999988079071 0.0093899685889482
0.800000011920929 0.0079338504001498
0.899999976158142 0.0077731474302709
1 0.007881230674684
};
\addlegendentry{64}
\end{axis}

\end{tikzpicture}
\captionsetup{margin=90pt}
\vspace{0pt}
\end{subfigure}
\begin{subfigure}[b]{0.49\linewidth}
\centering
\begin{tikzpicture}

\definecolor{crimson2143940}{RGB}{214,39,40}
\definecolor{darkgray176}{RGB}{176,176,176}
\definecolor{darkorange25512714}{RGB}{255,127,14}
\definecolor{forestgreen4416044}{RGB}{44,160,44}
\definecolor{lightgray204}{RGB}{204,204,204}
\definecolor{mediumpurple148103189}{RGB}{148,103,189}
\definecolor{orchid227119194}{RGB}{227,119,194}
\definecolor{sienna1408675}{RGB}{140,86,75}
\definecolor{steelblue31119180}{RGB}{31,119,180}

\begin{axis}[
width=0.45*6.028in,
height=0.45*4.754in,
legend cell align={left},
legend style={
  fill opacity=0.8,
  draw opacity=1,
  text opacity=1,
  at={(0.03,0.97)},
  anchor=north west,
  draw=none
},
legend columns=-1,
tick align=outside,
tick pos=left,
x grid style={darkgray176},
xlabel={$t_k$},
xmajorgrids,
xmin=-0.05, xmax=1.05,
xtick style={color=black},
y grid style={gray},
ylabel={$E_k$},
ymajorgrids,
ymin=-0.0250299139682389, ymax=0.197452039569865,
ytick style={color=black},
legend to name={NOT USED 1},
scaled y ticks=false,
y tick label style={/pgf/number format/fixed, /pgf/number format/precision=2}
]
\addplot [semithick, steelblue31119180, mark=*,  mark size=1, mark options={solid}]
table {%
0.1 0.019650349393486977
0.2 0.056019704788923264
0.3 0.09588053822517395
0.4 0.1287962645292282
0.5 0.15109458565711975
0.6 0.1627911925315857
0.7 0.17520447075366974
0.8 0.18032851815223694
0.9 0.18573002517223358
1.0 0.18411678075790405
};
\addlegendentry{1}
\addplot [semithick, darkorange25512714, mark=*,  mark size=1, mark options={solid}]
table {%
0.1 0.013847175985574722
0.2 0.038250911980867386
0.3 0.06561241298913956
0.4 0.08882838487625122
0.5 0.10481710731983185
0.6 0.11253099143505096
0.7 0.1228213906288147
0.8 0.12792888283729553
0.9 0.12185613065958023
1.0 0.12760348618030548
};
\addlegendentry{2}
\addplot [semithick, forestgreen4416044, mark=*,  mark size=1, mark options={solid}]
table {%
0.1 0.009811576455831528
0.2 0.028161795809864998
0.3 0.04619405418634415
0.4 0.06157395988702774
0.5 0.07244338095188141
0.6 0.07798375189304352
0.7 0.08373930305242538
0.8 0.08219502866268158
0.9 0.08604670315980911
1.0 0.08675577491521835
};
\addlegendentry{4}
\addplot [semithick, crimson2143940, mark=*,  mark size=1, mark options={solid}]
table {%
0.1 0.007191061973571777
0.2 0.020300444215536118
0.3 0.03386276960372925
0.4 0.04319335147738457
0.5 0.05031861737370491
0.6 0.05648641660809517
0.7 0.05819090083241463
0.8 0.05811295658349991
0.9 0.05844085291028023
1.0 0.060493409633636475
};
\addlegendentry{8}
\addplot [semithick, mediumpurple148103189, mark=*,  mark size=1, mark options={solid}]
table {%
0.1 0.005063983611762524
0.2 0.01395766157656908
0.3 0.023642364889383316
0.4 0.03038715571165085
0.5 0.03527696058154106
0.6 0.03794194757938385
0.7 0.040767841041088104
0.8 0.03980448842048645
0.9 0.041348181664943695
1.0 0.04162648692727089
};
\addlegendentry{16}
\addplot [semithick, sienna1408675, mark=*,  mark size=1, mark options={solid}]
table {%
0.1 0.00356659316457808
0.2 0.010417107492685318
0.3 0.016870060935616493
0.4 0.02190474607050419
0.5 0.0259617418050766
0.6 0.027815047651529312
0.7 0.028479833155870438
0.8 0.028524640947580338
0.9 0.02979849837720394
1.0 0.02983264811336994
};
\addlegendentry{32}
\addplot [semithick, orchid227119194, mark=*,  mark size=1, mark options={solid}]
table {%
0.1 0.0027804772835224867
0.2 0.008264032192528248
0.3 0.012517161667346954
0.4 0.016410578042268753
0.5 0.01816876232624054
0.6 0.019774217158555984
0.7 0.02023204043507576
0.8 0.02031780406832695
0.9 0.020729463547468185
1.0 0.020966149866580963
};
\addlegendentry{64}
\end{axis}

\end{tikzpicture}
\captionsetup{margin=80pt}
\vspace{0pt}
\end{subfigure}
\begin{tikzpicture}[overlay]
\node at (0,0) {$ $};
\node at (-5.3,-0.0) {\ref*{fig: over time legend OU}};
\node at (-9.4, -0.0) {$N = $};
\end{tikzpicture}
\vspace{-10pt}
\captionsetup{justification = justified}
\caption{Ornstein--Uhlenbeck process. The figure illustrates the error trajectories for seven different discretizations. To the left we see the error $e_k$ and to the right we see the residual $E_k$, $k=1,\dots,K$.}
\vspace{-4pt}
\label{fig: OU over time}
\end{figure}

To investigate the convergence rate, we fix $k=K$ and plot both the final time error $e_K$ and the accumulated \emph{a posteriori} term $E := \sum_{k=1}^K E_k$ in Figure~\ref{fig: OU conv plot}. These quantities are shown together with a reference slope of $N^{-\frac{1}{2}}$ on a logarithmic scale. We see that both terms converge at least with order $\frac{1}{2}$. These results are consistent with the theoretical error bound in \eqref{main theorem eq: error bound}, where $e_K$ is dominated by the discretization error $N^{-\frac{1}{2}}$ and the residual $E$.

\begin{figure}[h]
\captionsetup{justification=raggedleft}
\begin{subfigure}[b]{0.49\linewidth}
\centering
\begin{tikzpicture}

\definecolor{darkgray176}{RGB}{176,176,176}
\definecolor{lightgray204}{RGB}{204,204,204}

\begin{axis}[
width=0.45*6.028in,
height=0.45*4.754in,
legend cell align={left},
legend style={fill opacity=0.8, draw opacity=1, text opacity=1, draw=none},
legend columns=-1,
log basis x={2},
log basis y={2},
tick align=outside,
tick pos=left,
x grid style={darkgray176},
xlabel={$N$},
xmajorgrids,
xmin=0.640896415253715, xmax=80.0546276800871,
xmode=log,
xtick style={color=black},
y grid style={darkgray176},
ylabel={$e_K$},
ymajorgrids,
ymin=0.00494200050408815, ymax=0.40450208646728,
ymode=log,
ytick style={color=black},
legend to name={fig: conv legend OU}
]
\addplot [semithick, blue, mark=x, mark size=3, mark options={solid}, only marks]
table {%
1 0.178334429860115
2 0.104414664208889
4 0.0649075284600257
8 0.0354757905006408
16 0.0155473453924059
32 0.0109742013737559
64 0.007881230674684
};
\addlegendentry{$e_K(N)\quad$}
\addplot [semithick, red, mark=x, mark size=3, mark options={solid}, only marks]
table {%
1 10
};
\addlegendentry{$E(N)\quad$}
\addplot [semithick, black]
table {%
1 0.35666885972023
2 0.252202969346248
4 0.178334429860115
8 0.126101484673124
16 0.0891672149300575
32 0.063050742336562
64 0.0445836074650287
};
\addlegendentry{$N^{-\frac{1}{2}}$}
\end{axis}

\end{tikzpicture}
\captionsetup{margin=90pt}
\vspace{0pt}
\end{subfigure}
\begin{subfigure}[b]{0.49\linewidth}
\centering
\begin{tikzpicture}

\definecolor{darkgray176}{RGB}{176,176,176}
\definecolor{lightgray204}{RGB}{204,204,204}

\begin{axis}[
width=0.45*6.028in,
height=0.45*4.754in,
legend cell align={left},
legend style={fill opacity=0.8, draw opacity=1, text opacity=1, draw=none},
legend columns=-1,
log basis x={2},
log basis y={2},
tick align=outside,
tick pos=left,
x grid style={darkgray176},
xlabel={$N$},
xmajorgrids,
xmin=0.640896415253715, xmax=80.0546276800871,
xmode=log,
xtick style={color=black},
y grid style={darkgray176},
ylabel={$E$},
ymajorgrids,
ymin=0.1294200050408815, ymax=2.80450208646728,
ymode=log,
ytick style={color=black},
legend to name={NOT USED 2}
]
\addplot [semithick, red, mark=x, mark size=3, mark options={solid}, only marks]
table {%
1 1.5629
2 1.0061
4 0.6706
8 0.4801
16 0.3379
32 0.2331
64 0.1675
};
\addlegendentry{$E(N)$}
\addplot [semithick, black]
table {%
1 2
64 0.25
};
\addlegendentry{$N^{-\frac{1}{2}}$}
\end{axis}

\end{tikzpicture}
\captionsetup{margin=80pt}
\vspace{0pt}
\end{subfigure}
\begin{tikzpicture}[overlay]
\node at (0,0) {$ $};
\node at (-5.9,0.1) {\ref*{fig: conv legend OU}};
\end{tikzpicture}
\vspace{-10pt}
\captionsetup{justification = justified}
\caption{Ornstein--Uhlenbeck process. The figure presents the error and the accumulated residual at the final time for seven different discretizations together with a reference line of order $\frac{1}{2}$. To the left we have $e_K$ and to the right the accumulated residual $E=\sum_{k=1}^K E_k$.}
\vspace{-10pt}
\label{fig: OU conv plot}
\end{figure}
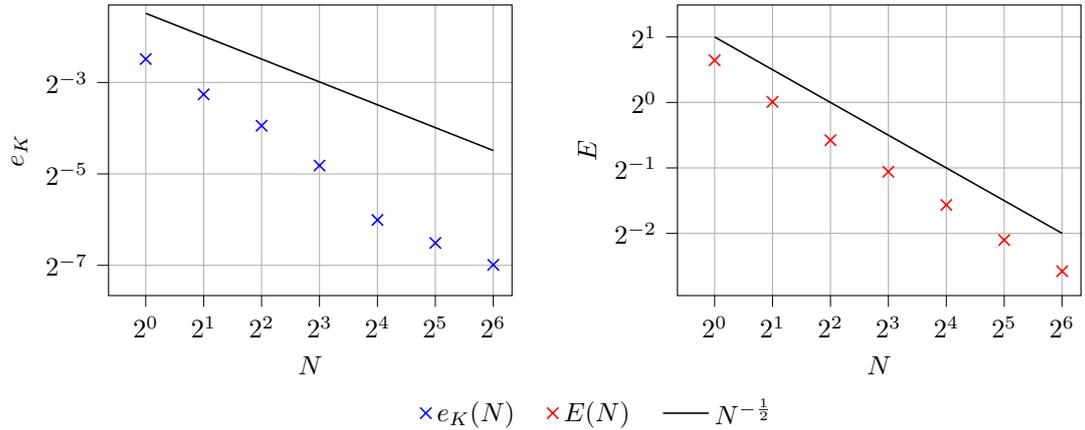

\subsubsection*{Bistable process}
The second example concerns an SDE with nonlinear drift given by $\mu(x) = \frac{2}{5}(5x - x^3)$. Here, condition~\eqref{eq: condition smooth} is not satisfied due to the cubic drift, and no theoretical convergence guarantee is available. The drift corresponds to the gradient of a double-well potential, leading to a bimodal invariant distribution for $S_t$ as $t \to \infty$. The dynamics  \eqref{introduction eq: state} of $S$, initialized from $\pi_0$, tend to drive the state $S$ toward the two stable modes of the potential landscape. In this nonlinear setting, no closed-form solution is available for the coupled system \eqref{eq: global Fokker--Planck with update}--\eqref{eq: FP-update}. As a result, we employ a particle filter to compute a reference solution; implementation details are provided in Appendix~\ref{Appendix: implementation details}. In Figure~\ref{fig: bistable over time}, we present the error measures $e_k$ and $E_k$ at each observation time $t_k$, for $k = 1, \dots, K$, across seven discretizations with $N = 2^j$, $j = 0, \dots, 6$. Similar to the linear case, the error decreases uniformly as $N$ increases. Moreover, for the finest discretizations ($N = 32$ and $N = 64$), the error appears to stabilize over time. 

\begin{figure}[h]
\captionsetup{justification=raggedleft}
\begin{subfigure}[b]{0.49\linewidth}
\centering
\begin{tikzpicture}

\definecolor{crimson2143940}{RGB}{214,39,40}
\definecolor{darkgray176}{RGB}{176,176,176}
\definecolor{darkorange25512714}{RGB}{255,127,14}
\definecolor{forestgreen4416044}{RGB}{44,160,44}
\definecolor{lightgray204}{RGB}{204,204,204}
\definecolor{mediumpurple148103189}{RGB}{148,103,189}
\definecolor{orchid227119194}{RGB}{227,119,194}
\definecolor{sienna1408675}{RGB}{140,86,75}
\definecolor{steelblue31119180}{RGB}{31,119,180}

\begin{axis}[
width=0.45*6.028in,
height=0.45*4.754in,
legend cell align={left},
legend style={
  fill opacity=0.8,
  draw opacity=1,
  text opacity=1,
  at={(0.03,0.97)},
  anchor=north west,
  draw=none
},
legend columns=-1,
tick align=outside,
tick pos=left,
x grid style={darkgray176},
xlabel={$t_k$},
xmajorgrids,
xmin=-0.05, xmax=1.05,
xtick style={color=black},
y grid style={gray},
ylabel={$e_k$},
ymajorgrids,
ymin=-0.0250299139682389, ymax=0.797452039569865,
ytick style={color=black},
legend to name={fig: over time legend BI},
scaled y ticks=false,
y tick label style={/pgf/number format/fixed, /pgf/number format/precision=2}
]
\addplot [semithick, steelblue31119180, mark=*, mark size=1, mark options={solid}]
table {%
0.100000001490116 0.0607605390250682
0.200000002980232 0.118093185126781
0.300000011920929 0.192290961742401
0.400000005960464 0.267240524291992
0.5 0.33778315782547
0.600000023841858 0.400898516178131
0.699999988079071 0.478385746479034
0.800000011920929 0.540404081344604
0.899999976158142 0.608083069324493
1 0.702251613140106
};
\addlegendentry{1}
\addplot [semithick, darkorange25512714, mark=*, mark size=1, mark options={solid}]
table {%
0.100000001490116 0.0341807603836059
0.200000002980232 0.0606316104531288
0.300000011920929 0.101827837526798
0.400000005960464 0.144106984138489
0.5 0.192098677158356
0.600000023841858 0.238485708832741
0.699999988079071 0.301561802625656
0.800000011920929 0.379052609205246
0.899999976158142 0.44201472401619
1 0.489093244075775
};
\addlegendentry{2}
\addplot [semithick, forestgreen4416044, mark=*, mark size=1, mark options={solid}]
table {%
0.100000001490116 0.0408417731523513
0.200000002980232 0.0675080120563507
0.300000011920929 0.0988634005188942
0.400000005960464 0.134928807616234
0.5 0.167280912399292
0.600000023841858 0.196034178137779
0.699999988079071 0.24178272485733
0.800000011920929 0.28146767616272
0.899999976158142 0.33975139260292
1 0.386334419250488
};
\addlegendentry{4}
\addplot [semithick, crimson2143940, mark=*, mark size=1, mark options={solid}]
table {%
0.100000001490116 0.0360473170876503
0.200000002980232 0.0509448610246181
0.300000011920929 0.0821112021803855
0.400000005960464 0.102546788752079
0.5 0.112244099378586
0.600000023841858 0.129198372364044
0.699999988079071 0.144791916012764
0.800000011920929 0.175666093826294
0.899999976158142 0.20179882645607
1 0.245309382677078
};
\addlegendentry{8}
\addplot [semithick, mediumpurple148103189, mark=*, mark size=1, mark options={solid}]
table {%
0.100000001490116 0.0316134355962276
0.200000002980232 0.0399605855345726
0.300000011920929 0.0666552633047103
0.400000005960464 0.0943477600812912
0.5 0.105615086853504
0.600000023841858 0.118349328637123
0.699999988079071 0.127122581005096
0.800000011920929 0.137982815504074
0.899999976158142 0.13723236322403
1 0.156857848167419
};
\addlegendentry{16}
\addplot [semithick, sienna1408675, mark=*, mark size=1, mark options={solid}]
table {%
0.100000001490116 0.0276506338268518
0.200000002980232 0.0391778722405433
0.300000011920929 0.0697139576077461
0.400000005960464 0.0936990603804588
0.5 0.107090167701244
0.600000023841858 0.109671488404274
0.699999988079071 0.101565144956112
0.800000011920929 0.091715008020401
0.899999976158142 0.0879844203591346
1 0.10005746036768
};
\addlegendentry{32}
\addplot [semithick, orchid227119194, mark=*, mark size=1, mark options={solid}]
table {%
0.100000001490116 0.0233815964311361
0.200000002980232 0.0411717779934406
0.300000011920929 0.0757414028048515
0.400000005960464 0.0990531370043754
0.5 0.0930826142430305
0.600000023841858 0.0911947339773178
0.699999988079071 0.0863903388381004
0.800000011920929 0.0976323187351226
0.899999976158142 0.0985706523060798
1 0.0946451649069786
};
\addlegendentry{64}
\end{axis}

\end{tikzpicture}
\captionsetup{margin=90pt}
\vspace{0pt}
\end{subfigure}
\begin{subfigure}[b]{0.49\linewidth}
\centering
\begin{tikzpicture}

\definecolor{crimson2143940}{RGB}{214,39,40}
\definecolor{darkgray176}{RGB}{176,176,176}
\definecolor{darkorange25512714}{RGB}{255,127,14}
\definecolor{forestgreen4416044}{RGB}{44,160,44}
\definecolor{lightgray204}{RGB}{204,204,204}
\definecolor{mediumpurple148103189}{RGB}{148,103,189}
\definecolor{orchid227119194}{RGB}{227,119,194}
\definecolor{sienna1408675}{RGB}{140,86,75}
\definecolor{steelblue31119180}{RGB}{31,119,180}

\begin{axis}[
width=0.45*6.028in,
height=0.45*4.754in,
legend cell align={left},
legend style={
  fill opacity=0.8,
  draw opacity=1,
  text opacity=1,
  at={(0.03,0.97)},
  anchor=north west,
  draw=none
},
legend columns=-1,
tick align=outside,
tick pos=left,
x grid style={darkgray176},
xlabel={$t_k$},
xmajorgrids,
xmin=-0.05, xmax=1.05,
xtick style={color=black},
y grid style={gray},
ylabel={$E_k$},
ymajorgrids,
ymin=-0.0250299139682389, ymax=0.337452039569865,
ytick style={color=black},
legend to name={NOT USED 3},
scaled y ticks=false,
y tick label style={/pgf/number format/fixed, /pgf/number format/precision=2}
]
\addplot [semithick, steelblue31119180, mark=*, mark size=1, mark options={solid}]
table {%
0.1 0.029798373579978943
0.2 0.041915372014045715
0.3 0.07213152945041656
0.4 0.11996451765298843
0.5 0.24093681573867798
0.6 0.23160192370414734
0.7 0.2843133807182312
0.8 0.29756999015808105
0.9 0.22104889154434204
1.0 0.2170245349407196
};
\addlegendentry{1}
\addplot [semithick, darkorange25512714, mark=*, mark size=1, mark options={solid}]
table {%
0.1 0.012897484004497528
0.2 0.03349829837679863
0.3 0.0591903030872345
0.4 0.0869314968585968
0.5 0.1187080666422844
0.6 0.1386694759130478
0.7 0.14894753694534302
0.8 0.1618858128786087
0.9 0.17170500755310059
1.0 0.17098137736320496
};
\addlegendentry{2}
\addplot [semithick, forestgreen4416044, mark=*, mark size=1, mark options={solid}]
table {%
0.1 0.00791148841381073
0.2 0.020810524001717567
0.3 0.04315149411559105
0.4 0.062312062829732895
0.5 0.08693436533212662
0.6 0.10256455093622208
0.7 0.11306668072938919
0.8 0.11656250804662704
0.9 0.12488582730293274
1.0 0.12544378638267517
};
\addlegendentry{4}
\addplot [semithick, crimson2143940, mark=*, mark size=1, mark options={solid}]
table {%
0.1 0.005348294973373413
0.2 0.017247773706912994
0.3 0.03621263802051544
0.4 0.0574151873588562
0.5 0.0754670724272728
0.6 0.09146799892187119
0.7 0.09355376660823822
0.8 0.10292912274599075
0.9 0.10281358659267426
1.0 0.1074482798576355
};
\addlegendentry{8}
\addplot [semithick, mediumpurple148103189, mark=*, mark size=1, mark options={solid}]
table {%
0.1 0.003708111820742488
0.2 0.0126029122620821
0.3 0.031521186232566833
0.4 0.053882576525211334
0.5 0.06479468941688538
0.6 0.07322172820568085
0.7 0.07744510471820831
0.8 0.07750208675861359
0.9 0.08117443323135376
1.0 0.08521156013011932
};
\addlegendentry{16}
\addplot [semithick, sienna1408675, mark=*, mark size=1, mark options={solid}]
table {%
0.1 0.0029594635125249624
0.2 0.012678813189268112
0.3 0.04370829090476036
0.4 0.0663168877363205
0.5 0.06112005189061165
0.6 0.07355296611785889
0.7 0.07334164530038834
0.8 0.0782775729894638
0.9 0.07914406061172485
1.0 0.07482649385929108
};
\addlegendentry{32}
\addplot [semithick, orchid227119194, mark=*, mark size=1, mark options={solid}]
table {%
0.1 0.0035723629407584667
0.2 0.015197609551250935
0.3 0.042732786387205124
0.4 0.05797553434967995
0.5 0.0702190175652504
0.6 0.06365595012903214
0.7 0.07066435366868973
0.8 0.07190365344285965
0.9 0.07693210244178772
1.0 0.07143891602754593
};
\addlegendentry{64}
\end{axis}

\end{tikzpicture}
\captionsetup{margin=80pt}
\vspace{0pt}
\end{subfigure}
\begin{tikzpicture}[overlay]
\node at (0,0) {$ $};
\node at (-5.5,-0.4) {\ref*{fig: over time legend BI}};
\node at (-9.6, -0.4) {$N = $};
\end{tikzpicture}
\vspace{-2pt}
\captionsetup{justification = justified}
\caption{Bistable process. The figure illustrates the error trajectories for seven different discretizations. To the left we see the error $e_k$ and to the right we see the residual $E_k$, $k=1,\dots,K$.}
\vspace{-10pt}
\label{fig: bistable over time}
\end{figure}
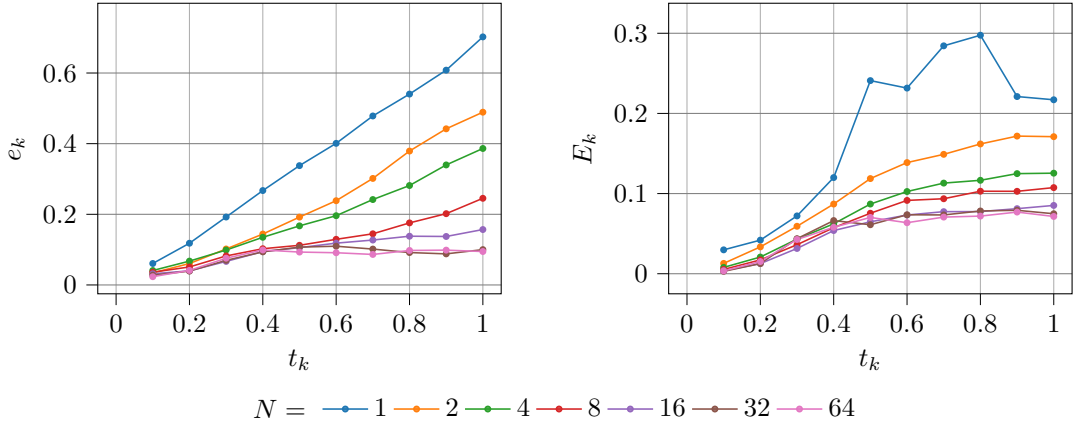

In Figure~\ref{fig: bistable conv plot}, we examine the convergence behavior at the final time $t_K$. Both the final-time error $e_K$ and the accumulated \emph{a posteriori} term $E = \sum_{k=1}^K E_k$ are plotted against a reference slope of $N^{-\frac{1}{2}}$ on a logarithmic scale. The error $e_K$ exhibits an order $\frac{1}{2}$ convergence rate for all but the finest discretization. In contrast, the \emph{a posteriori} term $E$ follows an order $\frac{1}{2}$ decay up to $N = 16$, after which the convergence stagnates. This behavior aligns with the theoretical structure of the error bound in \eqref{main theorem eq: error bound}: once $E$ ceases to decrease, the total error $e_K$ likewise plateaus, reflecting its dependence on $E$ as one of the dominant terms. We emphasize, however, that the coefficients in this example do not satisfy the assumptions of Theorem~\ref{main theorem: error bound}.

The observed plateau of $E$ for large $N$ is likely due to limitations in the practical implementation of the method. Potential causes include insufficient Monte Carlo sampling to estimate the expected value in \eqref{eq: Monte Carlo minimization}, suboptimal network architectures, or challenges in optimization during training. These factors may dominate the discretization error beyond a certain resolution, thus limiting further convergence.

\begin{figure}[h]
\captionsetup{justification=raggedleft}
\begin{subfigure}[b]{0.49\linewidth}
\centering
\begin{tikzpicture}

\definecolor{darkgray176}{RGB}{176,176,176}
\definecolor{lightgray204}{RGB}{204,204,204}

\begin{axis}[
width=0.45*6.028in,
height=0.45*4.754in,
legend cell align={left},
legend style={fill opacity=0.8, draw opacity=1, text opacity=1, draw=none},
legend columns=-1,
log basis x={2},
log basis y={2},
tick align=outside,
tick pos=left,
x grid style={darkgray176},
xlabel={$N$},
xmajorgrids,
xmin=0.640896415253715, xmax=80.0546276800871,
xmode=log,
xtick style={color=black},
y grid style={darkgray176},
ylabel={$e_K$},
ymajorgrids,
ymin=0.0494200050408815, ymax=1.80450208646728,
ymode=log,
ytick style={color=black},
legend to name={fig: conv legend BI}
]
\addplot [semithick, blue, mark=x, mark size=3, mark options={solid}, only marks]
table {%
1 0.702251613140106
2 0.489093244075775
4 0.386334419250488
8 0.245309382677078
16 0.156857848167419
32 0.10005746036768
64 0.0946451649069786
};
\addlegendentry{$e_K(N)\quad$}
\addplot [semithick, red, mark=x, mark size=3, mark options={solid}, only marks]
table {%
1 10
};
\addlegendentry{$E(N)\quad$}
\addplot [semithick, black]
table {%
1 1.40450322628021
2 0.993133755501122
4 0.702251613140106
8 0.496566877750561
16 0.351125806570053
32 0.248283438875281
64 0.175562903285027
};
\addlegendentry{$N^{-\frac{1}{2}}$}
\end{axis}

\end{tikzpicture}
\captionsetup{margin=90pt}
\vspace{0pt}
\end{subfigure}
\begin{subfigure}[b]{0.49\linewidth}
\centering
\begin{tikzpicture}

\definecolor{darkgray176}{RGB}{176,176,176}
\definecolor{lightgray204}{RGB}{204,204,204}

\begin{axis}[
width=0.45*6.028in,
height=0.45*4.754in,
legend cell align={left},
legend style={fill opacity=0.8, draw opacity=1, text opacity=1, draw=none},
legend columns=-1,
log basis x={2},
log basis y={2},
tick align=outside,
tick pos=left,
x grid style={darkgray176},
xlabel={$N$},
xmajorgrids,
xmin=0.640896415253715, xmax=80.0546276800871,
xmode=log,
xtick style={color=black},
y grid style={darkgray176},
ylabel={$E$},
ymajorgrids,
ymin=0.2094200050408815, ymax=2.80450208646728,
ymode=log,
ytick style={color=black},
legend to name={NOT USED 4}
]
\addplot [semithick, red, mark=x, mark size=3, mark options={solid}, only marks]
table {%
1 1.8853
2 1.1087
4 0.8103
8 0.7004
16 0.5687
32 0.5683
64 0.5563
};
\addlegendentry{$E(N)$}
\addplot [semithick, black]
table {%
1 2.5
64 0.3125
};
\addlegendentry{$N^{-\frac{1}{2}}$}
\end{axis}

\end{tikzpicture}
\captionsetup{margin=80pt}
\vspace{0pt}
\end{subfigure}
\begin{tikzpicture}[overlay]
\node at (0,0) {$ $};
\node at (-5.9,0.1) {\ref*{fig: conv legend BI}};
\end{tikzpicture}
\vspace{-10pt}
\captionsetup{justification = justified}
\caption{Bistable process. The figure presents the error and the accumulated residual at the final time for seven different discretizations together with a reference line of order $\frac{1}{2}$. To the left we have $e_K$ and to the right the accumulated residual $E=\sum_{k=1}^K E_k$.}
\vspace{4pt}
\label{fig: bistable conv plot}
\end{figure}
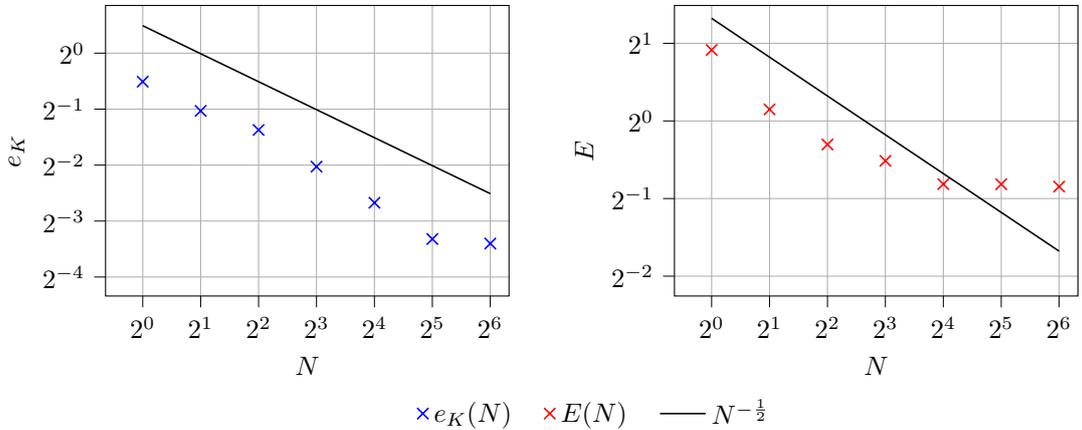

\section{Conclusion and outlook}
\label{section: conclusion}
In this paper, considering the filtering problem in a continuous-discrete setting, we introduce an approximate method, prove an error bound for the method, and empirically confirm the theoretical rate. 
The objective is to develop a filtering approach suitable for high-dimensional and strongly nonlinear settings, where neither Kalman filters nor particle filters are sufficient. This work establishes the theoretical foundation, while a complementary numerical study focusing on high-dimensional problems was conducted concurrently by the first author confirming its numerical robustness \cite{baagmark2025high}.

The numerical results confirm the convergence predicted by Theorem~\ref{main theorem: error bound}. In both the linear and nonlinear examples, the accuracy improves with finer temporal discretization, and for sufficiently fine resolutions, the error, for both left-hand side and right-hand side, remains essentially constant over time. The observed rates match the theoretical prediction until the improvement is limited by factors such as network capacity or training accuracy, rather than by the discretization itself. This indicates that the method achieves the theoretically predicted rates until it is constrained by implementation aspects.

\subsection*{Acknowledgements}
The authors would like to thank Melker Bild for his contributions through initial experiments conducted as part of his master's thesis, which explored related aspects of deep BSDE filtering. The work of {K.B.} and {S.L.} was supported by the Wallenberg AI, Autonomous Systems and Software Program (WASP) funded by the Knut and Alice Wallenberg Foundation. The computations were enabled by resources provided by the National Academic Infrastructure for Supercomputing in Sweden (NAISS) at Chalmers e-Commons partially funded by the Swedish Research Council through grant agreement no. 2022-06725.

\bibliographystyle{abbrv}
\bibliography{bib}

\begin{thebibliography}{10}

\bibitem{andersson2025deep}
K.~Andersson, A.~Andersson, and C.~W. Oosterlee.
\newblock The deep multi-{FBSDE} method: a robust deep learning method for coupled {FBSDEs}.
\newblock {\em arXiv:2503.13193}, 2025.

\bibitem{bagmark_1}
K.~B{\aa}gmark, A.~Andersson, and S.~Larsson.
\newblock An energy-based deep splitting method for the nonlinear filtering problem.
\newblock {\em Partial Differ. Equ. Appl.}, 4(2):14, 2023.

\bibitem{baagmark2024convergent}
K.~B{\aa}gmark, A.~Andersson, S.~Larsson, and F.~Rydin.
\newblock A convergent scheme for the {B}ayesian filtering problem based on the {F}okker--{P}lanck equation and deep splitting.
\newblock {\em arXiv:2409.14585}, 2024.

\bibitem{baagmark2025high}
K.~B{\aa}gmark and F.~Rydin.
\newblock High-dimensional {B}ayesian filtering through deep density approximation.
\newblock {\em arXiv:2511.07261}, 2025.

\bibitem{barshalom2001estimation}
Y.~Bar-Shalom, X.~R. Li, and T.~Kirubarajan.
\newblock {\em Estimation with Applications to Tracking and Navigation}.
\newblock John Wiley \& Sons, 2001.

\bibitem{Arnulf}
C.~Beck, S.~Becker, P.~Cheridito, A.~Jentzen, and A.~Neufeld.
\newblock Deep learning based numerical approximation algorithms for stochastic partial differential equations and high-dimensional nonlinear filtering problems.
\newblock {\em arXiv:2012.01194}, 2020.

\bibitem{Arnulf_PDE}
C.~Beck, S.~Becker, P.~Cheridito, A.~Jentzen, and A.~Neufeld.
\newblock Deep splitting method for parabolic {PDE}s.
\newblock {\em SIAM J. Sci. Comput.}, 43:A3135--A3154, 2021.

\bibitem{blackman1999design}
S.~S. Blackman and R.~Popoli.
\newblock {\em Design and Analysis of Modern Tracking Systems}.
\newblock Artech House Publishers, 1999.

\bibitem{challa2000nonlinear}
S.~Challa and Y.~Bar-Shalom.
\newblock Nonlinear filter design using {F}okker-{P}lanck-{K}olmogorov probability density evolutions.
\newblock {\em IEEE Trans. Aerosp. Electron. Syst.}, 36:309--315, 2000.

\bibitem{chopin2023computational}
N.~Chopin, A.~Fulop, J.~Heng, and A.~H. Thiery.
\newblock Computational {D}oob h-transforms for online filtering of discretely observed diffusions.
\newblock In {\em Int. Conf. Mach. Learn.}, pages 5904--5923. PMLR, 2023.

\bibitem{corenflos2024particlemala}
A.~Corenflos and A.~Finke.
\newblock Particle-{MALA} and particle-m{GRAD}: Gradient-based {MCMC} methods for high-dimensional state-space models.
\newblock {\em arXiv:2401.14868}, 2024.

\bibitem{corenflos2024conditioning}
A.~Corenflos, Z.~Zhao, T.~B. Sch{\"o}n, S.~S{\"a}rkk{\"a}, and J.~Sj{\"o}lund.
\newblock Conditioning diffusion models by explicit forward-backward bridging.
\newblock In {\em Int. Conf. Artif. Intell. Stat.}, pages 3709--3717. PMLR, 2025.

\bibitem{Crisan_Lobbe}
D.~Crisan, A.~Lobbe, and S.~Ortiz-Latorre.
\newblock An application of the splitting-up method for the computation of a neural network representation for the solution for the filtering equations.
\newblock {\em Stoch. Partial Differ. Equ.: Anal. Comput.}, 10:1050--1081, 2022.

\bibitem{cui2005comparison}
N.~Cui, L.~Hong, and J.~R. Layne.
\newblock A comparison of nonlinear filtering approaches with an application to ground target tracking.
\newblock {\em Signal Processing}, 85:1469--1492, 2005.

\bibitem{demissie2016nonlinear}
B.~Demissie, M.~A. Khan, and F.~Govaers.
\newblock {Nonlinear filter design using Fokker-Planck propagator in Kronecker tensor format}.
\newblock In {\em 2016 19th International Conference on Information Fusion (FUSION)}, pages 1--8. IEEE, 2016.

\bibitem{E_2017}
W.~E, J.~Han, and A.~Jentzen.
\newblock Deep learning-based numerical methods for high-dimensional parabolic partial differential equations and backward stochastic differential equations.
\newblock {\em Commun. Math. Stat}, 5:349--380, Nov. 2017.

\bibitem{e2017deep}
W.~E and B.~Yu.
\newblock The deep {R}itz method: A deep learning-based numerical algorithm for solving variational problems.
\newblock {\em Commun. Math. Stat}, 1:1--12, 2018.

\bibitem{BSDE_in_finance}
N.~El~Karoui, S.~Peng, and M.~C. Quenez.
\newblock Backward stochastic differential equations in finance.
\newblock {\em Math. Finance}, 7(1):1--71, 1997.

\bibitem{evensen2009data}
G.~Evensen.
\newblock {\em Data Assimilation: The Ensemble Kalman Filter}.
\newblock Springer, 2009.

\bibitem{finke2023conditional}
A.~Finke and A.~H. Thiery.
\newblock {Conditional sequential Monte Carlo in high dimensions}.
\newblock {\em Ann. Statist.}, 51:437--463, 2023.

\bibitem{goodman1997mathematics}
I.~R. Goodman, R.~P.~S. Mahler, and H.~T. Nguyen.
\newblock {\em Mathematics of Data Fusion}.
\newblock Kluwer Academic Publishers Group, Dordrecht, 1997.

\bibitem{gordon1993novel}
N.~J. Gordon, D.~J. Salmond, and A.~F.~M. Smith.
\newblock Novel approach to nonlinear/non-{G}aussian {B}ayesian state estimation.
\newblock {\em IEE Proceedings F (Radar and Signal Processing)}, 140(2):107--113, 1993.

\bibitem{Gustafsson_Danelljan}
F.~K. Gustafsson, M.~Danelljan, G.~Bhat, and T.~B. Sch{\"o}n.
\newblock Energy-based models for deep probabilistic regression.
\newblock In {\em European Conference on Computer Vision}, pages 325--343. Springer, 2020.

\bibitem{han2024deep}
J.~Han, W.~Hu, J.~Long, and Y.~Zhao.
\newblock Deep {P}icard iteration for high-dimensional nonlinear {PDE}s.
\newblock {\em SIAM J. Sci. Comput.}, 48(1):C1--C24, 2026.

\bibitem{han_convergence}
J.~Han and J.~Long.
\newblock Convergence of the deep {BSDE} method for coupled {FBSDE}s.
\newblock {\em Probab. Uncertain. Quant. Risk}, 5:Paper No. 5, 33, 2020.

\bibitem{isard1998condensation}
M.~Isard and A.~Blake.
\newblock Condensation—conditional density propagation for visual tracking.
\newblock {\em Int. J. Comput. Vis.}, 29(1):5--28, 1998.

\bibitem{jasra2005population}
A.~Jasra, D.~A. Stephens, and C.~C. Holmes.
\newblock Population-based reversible jump {M}arkov chain {M}onte {C}arlo.
\newblock {\em Biometrika}, 92(4):803--820, 2005.

\bibitem{johannes2009mcmc}
M.~S. Johannes and N.~G. Polson.
\newblock {MCMC Methods for Continuous-Time Financial Econometrics}.
\newblock In {\em Handbook of Financial Econometrics}, pages 1--72. Elsevier, 2009.

\bibitem{Kalman_Bucy}
R.~E. Kalman and R.~S. Bucy.
\newblock New results in linear filtering and prediction theory.
\newblock {\em J. Basic Eng.}, 83:95--108, 1961.

\bibitem{kapllani2025backward}
L.~Kapllani and L.~Teng.
\newblock A backward differential deep learning-based algorithm for solving high-dimensional nonlinear backward stochastic differential equations.
\newblock {\em IMA J. Numer. Anal.}, 2025.

\bibitem{kording2004bayesian}
K.~P. K{\"o}rding and D.~M. Wolpert.
\newblock Bayesian integration in sensorimotor learning.
\newblock {\em Nature}, 427(6971):244--247, 2004.

\bibitem{krishnapriyan2021characterizing}
A.~Krishnapriyan, A.~Gholami, S.~Zhe, R.~Kirby, and M.~W. Mahoney.
\newblock Characterizing possible failure modes in physics-informed neural networks.
\newblock {\em Adv. Neural Inf. Process. Syst.}, 34:26548--26560, 2021.

\bibitem{Lu_2021}
L.~Lu, P.~Jin, G.~Pang, Z.~Zhang, and G.~E. Karniadakis.
\newblock {Learning nonlinear operators via DeepONet based on the universal approximation theorem of operators}.
\newblock {\em Nat. Mach. Intell.}, 3:218--229, 2021.

\bibitem{luo2025physics}
K.~Luo, J.~Zhao, Y.~Wang, J.~Li, J.~Wen, J.~Liang, H.~Soekmadji, and S.~Liao.
\newblock Physics-informed neural networks for {PDE} problems: a comprehensive review.
\newblock {\em Artif. Intell. Rev.}, 58(10):1--43, 2025.

\bibitem{maybeck1979stochastic}
P.~S. Maybeck.
\newblock {\em Stochastic Models, Estimation, and Control, Volume 1}.
\newblock Academic Press, 1979.

\bibitem{naesseth2019high}
C.~A. Naesseth, F.~Lindsten, and T.~B. Sch{\"o}n.
\newblock {High-dimensional filtering using nested sequential Monte Carlo}.
\newblock {\em IEEE Trans. Signal Process.}, 67:4177--4188, 2019.

\bibitem{Oksendal}
B.~{\O}ksendal.
\newblock {\em Stochastic Differential Equations: An Introduction with Applications}.
\newblock Springer Science \& Business Media, 2003.

\bibitem{raissi2019physics}
M.~Raissi, P.~Perdikaris, and G.~E. Karniadakis.
\newblock Physics-informed neural networks: {A} deep learning framework for solving forward and inverse problems involving nonlinear partial differential equations.
\newblock {\em J.~Comput. Phys.}, 378:686--707, 2019.

\bibitem{schauer2017guided}
M.~Schauer, F.~van~der Meulen, and H.~van Zanten.
\newblock Guided proposals for simulating multi-dimensional diffusion bridges.
\newblock {\em Bernoulli}, 23(4A):2917--2950, 2017.

\bibitem{silverman1986density}
B.~Silverman.
\newblock {\em Density Estimation for Statistics and Data Analysis}.
\newblock Chapman \& Hall/CRC, 1986.

\bibitem{snyder2011particle}
C.~Snyder.
\newblock Particle filters, the “optimal” proposal and high-dimensional systems.
\newblock In {\em Proceedings of the ECMWF Seminar on Data Assimilation for atmosphere and ocean}, pages 1--10, 2011.

\bibitem{snyder2015performance}
C.~Snyder, T.~Bengtsson, and M.~Morzfeld.
\newblock Performance bounds for particle filters using the optimal proposal.
\newblock {\em Mon. Weather Rev.}, 143:4750--4761, 2015.

\bibitem{thrun2005probabilistic}
S.~Thrun, W.~Burgard, and D.~Fox.
\newblock {\em Probabilistic Robotics}.
\newblock MIT Press, 2005.

\bibitem{zhao2024conditional}
Z.~Zhao, Z.~Luo, J.~Sj\"{o}lund, and T.~B. Sch\"{o}n.
\newblock Conditional sampling within generative diffusion models.
\newblock {\em Philos. Trans. R. Soc. A}, 383(2299):20240329, 2025.

\end{thebibliography}

\appendix
\section{Implementation details}
\label{Appendix: implementation details}
In this section, we  present the implementation details. Section~\ref{SM: Networks and training} is on the architecture and parameters and on how the models are trained. Section~\ref{SM: Reference solutions} briefly describes the reference solutions. Section~\ref{SM: Evaluation} describes how the strong error term is evaluated.

\subsection{Networks and training}
\label{SM: Networks and training}
The spaces $\mathbf{NN}^{\Theta,p,k}$, $p=1$ or $p=d$, and $k=0,\dots,K-1$, which define our models, consist of fully connected feed-forward neural networks with three hidden layers, one input layer, and one output layer. The training is organized into $K$ sequential minimization problems, one for each prediction step $k = 0, \dots, K-1$. In each minimization problem, we employ a single network for the $w$ component of the solution and N distinct networks for the v components, one for each time step in the discretization. The architectural differences between the $w$ and $v$ networks are summarized in Table~\ref{tab:network-arch}. To allow parameter transfer between consecutive minimization problems, all models share the same fixed input dimension, independent of the number of observational inputs available at a given step. For time points without observations, the corresponding input components are set to zero. We choose a smaller architecture for the $v$-networks, since each of them models only a short propagation of the process $\mathcal{Y}$, whereas the $w$-network represents the full predictive density after training. The $w$-network uses an exponential output, similar to that of energy-based methods \cite{bagmark_1, Gustafsson_Danelljan}.
\begin{table}[h]
    \centering
    \begin{tabular}{lcc}
        \toprule
        & $w$-network & $v$-networks \\
        \midrule
        Input dimension         & $d + (K-1)\times{d'}$ & $d + (K-1)\times{d'}$ \\
        Hidden layer size       & $128$     & $32$      \\
        Activation function (hidden)       & ReLU     & ReLU      \\
        Activation function (output)       & Exponential     & None     \\
        Output dimension        & $1$       & $d$       \\
        Number of networks      & $1$       & $N$       \\
        \bottomrule
    \end{tabular}
    \caption{Architectural details of the models.}
    \label{tab:network-arch}
\end{table}
Training was carried out with the Adam optimizer with standard parameters, using a fixed learning rate of $10^{-4}$ and a constant batch size of $512$. Each epoch began by generating $200$ batches of simulated trajectories from the process $(S,O,\mathcal{X},W)$, acting as input to the scheme, and evaluating the terminal condition $\overline{g}$, acting as the target label in this learning task. Training was stopped after at most $100$ epochs, or earlier if the averaged loss over the epoch stopped decreasing for $5$ consecutive epochs (early stopping). Thus, we got different $M_{\text{train}}$ values (number of Monte Carlo samples) depending on the number of epochs that were required. We found that the models with the coarser discretizations converged with fewer epochs. Additionally, the cost of training is at least proportional to the number of discretization steps due to the sequential nature of the Euler--Maruyama step when calculating $\mathcal{Y}_N$. The normalization steps performed in both the training and inference time were performed with $J=10^3$ discretization points over a spatial grid on $[-5,5]$. All experiments were run on an NVIDIA A40 GPU (48GB memory).

\subsection{Reference solutions}
\label{SM: Reference solutions}
For the linear example, the Ornstein--Uhlenbeck process, the reference solution was obtained using a discrete-time Kalman filter \cite{Kalman_Bucy}. To reduce discretization error in the prediction step, we used $128$ intermediate prediction steps in each interval $[t_k,t_{k+1}]$; this should be compared to the largest value of the discretization parameter of the model, which was $N = 64$.

For the nonlinear example, the bistable process, we employed a bootstrap particle filter with the same refinement of $128$ intermediate prediction steps for improved accuracy \cite{gordon1993novel}. We used a Gaussian kernel density estimator to recover a probability density \cite{silverman1986density}. A total of $10^5$ particles were used to ensure a high-accuracy estimate of the filtering density and to reduce Monte Carlo variance to a negligible level.

\subsection{Evaluation of errors}
\label{SM: Evaluation}
We approximate the left hand side by considering new samples $(O^{(m)})_{m=1}^{M_{\text{eval}}}$ from \eqref{introduction eq: obs} for $M_{\text{eval}}>0$ and a spatial discretization by equidistant $x_i\in\R$, $i=1,\dots,I$, for $I>0$. We emphasize that we must consider sampled approximations over $\mathbb{O}_k$ since this space is of dimension $10$ for $k=10$ and $d' = 1$. For both examples, it suffices to consider the domain $[-5,5]\subset \R$ for the points $(x_i)_{i=1}^I$.
By these approximations, for $k=1,\dots,K$, we get
\begin{align*}
    \|
    p_k(t_k)
    -
    \widehat{p}_k
    \|_{L^\infty(\mathbb{O}_k;L^\infty(\R^d;\R))}
    \approx
    \underset{
    \substack{
    m=1,\dots,M_{\text{eval}}
    \\
    i=1,\dots,I
    }}
    {\mathrm{max}}
    \Big|
    p_k
    (t_k,x_i,O_{1:k}^{(m)}) 
    -
    \widehat{p}_k
    (x_i,O_{1:k}^{(m)})
    \Big|
    .
\end{align*}
Similarly, we use the samples $(O^{(m)})_{m=1}^{M_{\text{eval}}}$ to evaluate the a posteriori terms $E_k$. This is done through the same Euler--Maruyama steps as in the minimization formulation \eqref{eq: NN minimization}. In the evaluation, we set $I=10^3$ and $M_{\text{eval}} = 10^4$.

\end{document}